\theoremstyle{plain}
\newtheorem{thm}{Theorem}
\newtheorem{lemma}{Lemma}
\newtheorem{assump}{Assumption}
\theoremstyle{remark}
\newtheorem{rem}{Remark}
\def\convd{\stackrel{\mathcal{D}}{\rightarrow}}
\def\convp{\stackrel{\mathbb{P}}{\rightarrow}}
\def\ex{{\rm {\mathbb E\,}}}
\def\var{{\rm {\mathbb Var\,}}}
\def\cov{{\rm {\mathbb Cov\,}}}
\newcommand{\pp}{\mathbb{P}}
\newcommand{\ee}{\mathbb{E}}
\newcommand{\esssup}{\mathrm{ess\, sup}}
\begin{document}

\title[Parametric inference for SDEs]{Parametric inference for stochastic differential equations: a smooth and match approach}

\author{Shota Gugushvili}
\address{Mathematical Institute\\
Leiden University\\
P.O. Box 9512\\
2300 RA Leiden\\
The Netherlands}
\email{shota.gugushvili@math.leidenuniv.nl}
\author{Peter Spreij}
\address{Korteweg-de Vries Institute for Mathematics\\
University of Amsterdam\\
P.O.\ Box 94248\\
1090 GE Amsterdam\\
The Netherlands}
\email{spreij@uva.nl}

\subjclass[2000]{Primary: 62F12, Secondary: 62M05, 62G07, 62G20}

\keywords{Asymptotic normality; Diffusion process; Kernel density estimator; M-estimator; $\sqrt{n}$-consistency; Smooth and match estimator; Stochastic differential equation}

\begin{abstract}

We study the problem of parameter estimation for a univariate discretely observed ergodic diffusion process given as a solution to a stochastic differential equation. The estimation procedure we propose consists of two steps. In the first step, which is referred to as a smoothing step, we smooth the data and construct a nonparametric estimator of the invariant density of the process. In the second step, which is referred to as a matching step, we exploit a characterisation of the invariant density as a solution of a certain ordinary differential equation, replace the invariant density in this equation by its nonparametric estimator from the smoothing step in order to arrive at an intuitively appealing criterion function, and next define our estimator of the parameter of interest as a minimiser of this criterion function. Our main results show that under suitable conditions our estimator is $\sqrt{n}$-consistent, and even asymptotically normal. We also discuss a way of improving its asymptotic performance through a one-step Newton-Raphson type procedure and present results of a small scale simulation study.

\end{abstract}

\date{\today}

\maketitle

\section{Introduction}
\label{intro}
Stochastic differential equations play an important role in modelling various phenomena arising in fields as diverse as finance, physics, chemistry, engineering, biology, neuroscience and others, see e.g.\ \cite{allen}, \cite{hindriks}, \cite{musiela} and \cite{wong}.\ These equations usually depend on parameters, which are often unknown. On the other hand knowledge of these parameters is critical for the study of the process at hand and hence their estimation based on the observational data on the process under study is of great importance in practical applications. A formal setup that we consider in this paper is as follows: let $(\Omega, \mathcal{F}, \mathbb{P})$ be a  probability space. Consider a Brownian motion $W=(W_t)_{t\geq 0}$ and a random variable $\xi$ independent of $W$ that are defined on $(\Omega, \mathcal{F}, \mathbb{P})$ and let $\mathfrak{F}=(\mathfrak{F}_t)_{t \geq 0}$ be the augmented filtration generated by $\xi$ and $W.$ Consider a stochastic differential equation driven by $W,$
\begin{equation}
\label{sde}
\begin{cases}
dX_t=\mu(X_t;\theta)dt+\sigma(X_t;\theta)dW_t,\\
X_0=\xi,
\end{cases}
\end{equation}
where $\theta\in\Theta\subset\mathbb{R}$ is an unknown parameter and $X_0=\xi$ defines the initial condition. Assume that there exists a unique strong solution to \eqref{sde} on $(\Omega, \mathcal{F}, \mathbb{P})$ with respect to the Brownian motion $W$ and initial condition $\xi.$ Let $\theta_0$ denote the true parameter value. Furthermore, let $X$ be ergodic with invariant density $\pi(\cdot;\theta_0)$ and let $\xi\sim \pi(\cdot;\theta_0).$ The solution $X$ is thus a strictly stationary process. Given a discrete time sample $X_0,X_{\Delta},X_{2\Delta},\ldots,X_{n\Delta}$ from the process $X,$ our goal is to estimate the parameter $\theta_0.$ Hence here we consider a parametric inference problem for a stochastic differential equation. There is also a rich body of literature on nonparametric inference for stochastic differential equations, see e.g.\ \cite{comte}, \cite{reiss} and \cite{jacod} and references therein. A general reference on statistical inference for ergodic diffusion processes is \cite{kutoyants}.

A natural approach to estimation of $\theta_0$ is the maximum likelihood method. Assume that the transition density $p(\Delta,x,y;\theta)$ of $X$ exists. Then the likelihood function associated with observations $X_{0},X_{\Delta},\ldots,X_{n\Delta}$ can be written as
\begin{equation*}
p(X_0,X_{\Delta},X_{2\Delta},\ldots,X_{n\Delta};\theta)=\pi(X_{0};\theta)\prod_{j=0}^{n-1}p(\Delta,X_{j\Delta},X_{(j+1)\Delta};\theta),
\end{equation*}
and the maximum likelihood estimator can be computed by maximising the right-hand side of this expression over $\theta,$ provided both the invariant density and the transition density are known explicitly. Unfortunately, for many realistic and practically useful models transition densities are not available in explicit form, which makes exact computation of the maximum likelihood estimator impossible. In those cases when the likelihood cannot be evaluated analytically, a number of alternative estimators have been proposed in the literature, which try to emulate the maximum likelihood method and rely upon some approximation of the likelihood, whence their name, the approximate maximum likelihood estimators, derives. For an overview and relevant references see e.g.\ Section 5 in \cite{h_sorensen}. Although successful in a significant number of examples, these methods typically suffer from a considerable computational burden, see a brief discussion on pp.\ 350--351 in \cite{h_sorensen}.  We also remark that in general in statistical problems if the likelihood is a nonlinear function of the parameter of interest, computation of the maximum likelihood estimator is often far from straightforward, see e.g.\ \cite{barnett}. Returning then to diffusion processes, even if the transition densities are explicitly known, they still might be highly nonlinear functions of the parameter $\theta,$ which might render maximisation of the log-likelihood a difficult task. This is in particluar true for the Cox-Ingersoll-Ross (CIR) process (see e.g.\ pp.\ 356--358 in \cite{musiela} for more information on the CIR process), where the transition densities are noncentral chi-square densities, already numerical evaluation of which, saying nothing about the optimisation process itself, is a nontrivial task, see \cite{dyrting}.

A popular alternative to approximate maximum likelihood methods is furnished by Z-estimators, which are defined as zeroes in $\theta$ of estimating equations
\begin{equation*}
F_n(X_0,X_{\Delta},\ldots,X_{n\Delta};\theta)=0
\end{equation*}
for some given functions $F_n.$   For a general introduction to Z-estimators see e.g.\ Chapter 5 in \cite{vdvaart}. Z-estimators are often faster to compute than approximate maximum likelihood estimators, but the question of the choice of the estimating equations is a subtle one with no readily available recipes in many cases. For instance, the existing methods at times yield choices of $F_n$ that might give rise to numerical problems or that are infeasible in practice, see remarks on pp.\ 343--344 in \cite{h_sorensen}. For additional information on this approach to parameter estimation for diffusion processes and references see \cite{m_sorensen}, \cite{jacobsen}, \cite{kessler} and Section 4 in \cite{h_sorensen}.

In the present work we study an approach alternative to the ones surveyed above. In particular, we will use a characterisation of the invariant density $\pi(\cdot;\cdot)$ of \eqref{sde} as a solution of the ordinary differential equation (here a prime denotes a derivative with respect to $x$)
\begin{equation}
\label{ode}
\mu(x;\theta){\pi}(x;\theta) - \frac{1}{2}\left[ \sigma^2(x;\theta) {\pi}(x;\theta) \right]^{\prime}=0,
\end{equation}
to motivate an estimator $\hat{\theta}_n$ of $\theta_0$ defined as
\begin{equation}
\label{est_theta}
\hat{\theta}_n = \operatorname{argmin}_{\theta\in\Theta} R_n(\theta),
\end{equation}
where
\begin{equation}
\label{crf}
R_n(\theta)=\int_{\mathbb{R}}\left( \mu(x;\theta)\hat{\pi}(x) - \frac{1}{2}\left[ \sigma^2(x;\theta) \hat{\pi}(x) \right]^{\prime} \right)^2 w(x)dx.
\end{equation}
Here $w(\cdot)$ is a weight function chosen beforehand and $\hat{\pi}(\cdot)$ is a nonparametric estimator of ${\pi}(\cdot;\theta_0).$ In particular, in the latter capacity we will use a kernel density estimator. The intuition for $\hat{\theta}_n$ is that for $\hat{\pi}(\cdot)$ that is `close' to ${\pi}(\cdot;\theta_0),$ in view of \eqref{ode} the same must be true for $\hat{\theta}_n$ and $\theta_0.$ 

The estimator $\hat{\theta}_n$ will be called a smooth and match estimator. Its name reflects the fact that it is obtained through a two-step procedure: in the first step, which is referred to as a smoothing step, the data $Z_0,Z_1,\ldots,Z_n$ are smoothed in order to obtain a nonparametric estimator $\hat{\pi}(\cdot)$ of the stationary density $\pi(\cdot;\theta_0).$ In the second step, which is referred to as a matching step, a characterisation of $\pi(\cdot;\theta_0)$ as a solution of \eqref{ode} is used and an estimator of $\theta_0$ is obtained in such a way that the left-hand side of \eqref{ode} with $\pi(\cdot;\theta_0)$ replaced by $\hat{\pi}(\cdot)$  approximately matches zero. The construction of the estimator $\hat{\theta}_n$ is motivated by a similar construction used in parameter estimation problems for ordinary differential equations, see \cite{gugu} for additional information and references. Approaches to parameter estimation for stochastic differential equations that are close in spirit to the one considered in the present work, in that they rely on matching a parametric function to its nonparametric estimator, are studied in \cite{ait2}, \cite{bandi}, \cite{kristensen} and \cite{h_soren}. We remark that our approach differs from the approaches in these papers either by the type of asymptotics or by the criterion function.

The estimator $\hat{\theta}_n$ is especially straightforward to compute when the drift coefficient $\mu(\cdot;\cdot)$ is linear in the components of the parameter $\theta,$ see Remark \ref{rem_estex} below. Obviously, ease of computation cannot be a sole justification for the use of any  particular estimator and hence in order to provide more motivation for the use of our estimator in the present work we will study its asymptotic properties. Since the estimator $\hat{\theta}_n$ is ultimately motivated by a characterisation of the marginal density of $X,$ in the most general setting when both the drift and the dispersion coefficients  in \eqref{sde} depend on the parameter $\theta,$ the full parameter vector $\theta$ will typically be impossible to estimate due to identifiability problems. We hence have to specialise to some particular case, and we do this for the case when the dispersion coefficient $\sigma(\cdot;\theta)$ does not depend on $\theta$ and is a known function $\sigma(\cdot).$ Thus the stochastic differential equation underlying our model is
\begin{equation}
\label{sde0}
\begin{cases}
dX_t=\mu(X_t;\theta)dt+\sigma(X_t)dW_t,\\
X_0=\xi.
\end{cases}
\end{equation}

The structure of the paper is as follows: in Section \ref{assumptions} for the reader's convenience we list together the assumptions on our model. Detailed remarks on these assumptions are given in Section \ref{remarks}. When reading the paper, a reader can either browse through Section \ref{assumptions} or refer to it as need arises in the subsequent sections. A reader who finds the assumptions in Section \ref{assumptions} believable can skip Section \ref{remarks} at first reading. In Sections \ref{results} and \ref{normality} we state the main results of the paper, namely $\sqrt{n}$-consistency and asymptotic normality of $\hat{\theta}_n.$ In Section \ref{onestep} we discuss a further asymptotic improvement of the estimator $\hat{\theta}_n$ through a Newton-Raphson type procedure. Results of a small simulation study are presented in Section \ref{simulations}. Section \ref{proofs} contains proofs of the results from Sections \ref{results} and \ref{normality}. Finally, Appendix \ref{appendix} contains several technical lemmas used in the proofs of the results from Sections \ref{results} and \ref{normality}.

We remark that in the present work we do not strive for maximal generality. Rather, our goal is to explore asymptotic properties of an intuitively appealing estimator of $\theta_0,$ and to show that this estimator leads to reasonable results in a number of examples.

Throughout the paper we use the following notation for derivatives: a dot denotes a derivative of an arbitrary function $q(x;\theta)$ with respect to $\theta,$ while a prime denotes its derivative with respect to $x.$ We also define the strong mixing coefficient $\alpha_{\Delta}(k)$ as
\begin{equation*}
\alpha_{\Delta}(k)=\sup_{m \geq 0}\sup_{A\in \mathfrak{F}_{\leq m},B\in \mathfrak{F}_{\geq m+k} } |\mathbb{P}(AB)-\mathbb{P}(A)\mathbb{P}(B)|,
\end{equation*}
where $\mathfrak{F}_{\leq m}=\sigma(Z_j,j\leq m)$ and $\mathfrak{F}_{\geq m}=\sigma(Z_j,j\geq m)$ for $m\in\mathbb{N}\cup\{0\}.$ Here $Z_j=X_{j \Delta}$ for $j\in\mathbb{N}\cup\{0\}.$ We call the sequence $Z_j$ $\alpha$-mixing (or strongly mixing), if $\alpha_{\Delta}(k)\rightarrow 0$ as $k\rightarrow\infty.$ 
When comparing two sequences $\{ a_n \}$ and $\{ b_n \}$ of real numbers, we will use the notation $a_n\lesssim b_n$ to denote the fact that $\exists C>0,$ such that for $\forall n\in\mathbb{N}$ the inequality $a_n\leq C b_n$ holds. A similar convention will be used for $a_n \gtrsim b_n.$ The notation $a_n \asymp b_n$ will denote the fact that the sequences $\{ a_n \}$ and $\{ b_n \}$ are asymptotically of the same order.

\section{Assumptions}
\label{assumptions}

In this section we list the assumptions under which the theoretical results of the paper are proved. 

\begin{assump}
\label{cnd_theta}
The parameter space $\Theta$ is a compact subset of $\mathbb{R}$: $\Theta=[a,b]$ for $a<b.$
\end{assump}

\begin{assump}
\label{assump_sol}
The drift coefficient $\mu(\cdot;\theta)$ is known up to the parameter $\theta$ and the dispersion coefficient $\sigma(\cdot)$ is a known function. Furthermore, there exists a unique strong solution $X=(X_t)_{t\geq 0}$ to \eqref{sde0} on $(\Omega, \mathcal{F}, \mathbb{P})$ with respect to the Brownian motion $W$ and initial condition $\xi.$ It is a homogeneous Markov process with transition density $p(t,x,y;\theta).$ Moreover, this solution is ergodic with bounded invariant density $\pi(\cdot;\cdot)$ that has a bounded, continuous and integrable derivative $\pi^{\prime}(\cdot;\cdot),$ and for $\xi\sim\pi(\cdot;\theta)$ the solution $X$ is a strictly stationary process. Also, $\dot{\pi}(\cdot,\cdot)$ exists. Finally, for all $\theta\in\Theta$ it holds that the support of $\pi(\cdot;\cdot),$ i.e.\ the state space of $X,$ equals $\mathbb{R}.$
\end{assump}

\begin{assump}
\label{cnd_mixing}
A sample $X_0,X_{\Delta},\ldots$ (here $\Delta>0$ is fixed) from $X$ corresponding to the true parameter value $\theta_0$ is $\alpha$-mixing with strong mixing coefficients $\alpha_{\Delta}(k)$ satisfying the condition $\sum_{k=0}^{\infty}\alpha_{\Delta}(k)<\infty.$
\end{assump}

\begin{assump}
\label{cnd_smoothness}
The stationary density $\pi(\cdot;\cdot)$ satisfies the condition
\begin{equation*}
\forall t\in\mathbb{R}, \quad \left\{ \int_{\mathbb{R}} ( \pi^{(\alpha)}(x+t;\theta) - \pi^{(\alpha)}(x;\theta) )^2dx \right\}^{1/2} \leq L_{\theta} ,
\end{equation*}
for some constant $L_{\theta}>0$ (that may depend on $\theta$) and some integer $\alpha>3.$
\end{assump}

\begin{assump}
\label{cnd_kernel}
The kernel $K$ is symmetric and continuously differentiable, has support $[-1,1]$ and
satisfies the conditions
\begin{equation*}
\int_{-1}^1 K(u)du=1, \quad \int_{-1}^1 u^{l}K(u)du=0, \quad l=1,\ldots,\alpha.
\end{equation*}
Here $\alpha$ is the same as in Assumption \ref{cnd_smoothness}.
\end{assump}

\begin{assump}
\label{cnd_h}
The bandwidth $h=h_n$ depends on $n$ and $h\downarrow 0$ as $n\rightarrow\infty$ in such a way that
$nh^4\rightarrow\infty.$
\end{assump}

\begin{assump}
\label{cnd_w}
The weight function $w$ is nonnegative, continuously differentiable, bounded and integrable.
\end{assump}

\begin{assump}
\label{assump_characterisation}
The invariant density $\pi(\cdot;\cdot)$ solves the differential equation
\begin{equation}
\label{diffeq0}
\mu(x;\theta_0){\pi}(x) - \frac{1}{2}\left[ \sigma^2(x) {\pi}(x) \right]^{\prime}=0,
\end{equation}
where $\pi(\cdot)$ is the unknown function. Differentiability of $\sigma(\cdot)$ is also assumed.
\end{assump}

\begin{assump}
\label{cnd_drift_volatility}
The drift coefficient $\mu(\cdot;\cdot)$ is three times differentiable with respect to $\theta.$ The drift and dispersion coefficients and the corresponding derivatives are continuous functions of $x$ and $\theta.$ Furthermore, there exist functions $\widetilde{\mu}_j(\cdot),j=1,\ldots,4,$ such that
\begin{equation}
\label{mu_tilde}
\sup_{\theta\in\Theta} |\stackrel{(i)}{\mu}(x;\theta)| \leq \widetilde{\mu}_{i+1}(x), \quad \forall x\in\mathbb{R},
\end{equation}
for $i=0,1,2,3,$ and a function $\widetilde{\mu}_5(\cdot),$ such that
\begin{equation*}
\sup_{\theta\in\Theta} |\dot{\mu}^{\prime}(x;\theta)| \leq \widetilde{\mu}_{5}(x), \quad \forall x\in\mathbb{R}.
\end{equation*}
Here $\stackrel{(i)}{\mu}$ denotes the $i$th derivative of a function $\mu$ with respect to $\theta$ and $\stackrel{(0)}{\mu}(\cdot;\cdot)=\mu(\cdot;\cdot).$ Moreover, the functions
\begin{gather*}
\widetilde{\mu}_1^2(\cdot)w(\cdot), \quad \sigma^4(\cdot)w(\cdot), \quad \sigma^2(\cdot)(\sigma^{\prime}(\cdot))^2w(\cdot),\\
\widetilde{\mu}_2(\cdot)\widetilde{\mu}_1(\cdot)w(\cdot), \quad \widetilde{\mu}_5(\cdot)\sigma^2(\cdot)w(\cdot), \quad \widetilde{\mu}_2^2(\cdot)w(\cdot), \\
\sigma^2(\cdot)w(\cdot), \quad \sigma(\cdot)\sigma^{\prime}(\cdot)w(\cdot), \quad \widetilde{\mu}_2(\cdot)\sigma^2(\cdot)w^{\prime}(\cdot),\\
\widetilde{\mu}_3(\cdot)\widetilde{\mu}_1(\cdot)w(\cdot), \quad \widetilde{\mu}_3(\cdot)\sigma^4(\cdot)w(\cdot), \quad \widetilde{\mu}_3(\cdot)\widetilde{\mu}_1(\cdot)w(\cdot),\\
\widetilde{\mu}_3(\cdot)\sigma(\cdot)\sigma^{\prime}(\cdot)w(\cdot), \quad \widetilde{\mu}_3(\cdot)\sigma^2(\cdot)w(\cdot), \quad \widetilde{\mu}_3(\cdot)\widetilde{\mu}_2(\cdot)w(\cdot),\\
\widetilde{\mu}_4(\cdot)\widetilde{\mu}_1(\cdot)w(\cdot), \quad
\widetilde{\mu}_4(\cdot)\sigma(\cdot)\sigma^{\prime}(\cdot)w(\cdot), \quad \widetilde{\mu}_4(\cdot)\sigma^2(\cdot)w(\cdot),\\
\widetilde{\mu}_2(\cdot)\sigma^2(\cdot)w(\cdot), \quad \widetilde{\mu}_2(\cdot)\sigma(\cdot)\sigma^{\prime}(\cdot)w(\cdot), \quad \widetilde{\mu}_4(\cdot)\sigma^2(\cdot)w(\cdot)
\end{gather*}
are bounded and integrable. Finally,
$\lim_{|x|\rightarrow\infty}\widetilde{\mu}_2(x)w(x)\sigma^2(x)=0.$
\end{assump}

\section{Remarks on assumptions}
\label{remarks}

In this section we provide remarks on the assumptions made in Section \ref{assumptions}.

\begin{rem}
\label{univ} In Assumption \ref{cnd_theta} we assume that the parameter $\theta$ is univariate. This assumption is made for simplicity of the proofs only and the results of the paper can also be generalised to the case when $\theta$ is multivariate. Compactness of the parameter space $\Theta$ guarantees existence of our estimator $\hat{\theta}_n.$
\end{rem}

\begin{rem}
\label{rem_sol} In this remark we deal with Assumption \ref{assump_sol}. A standard condition that guarantees existence and uniqueness of a strong solution to \eqref{sde} is a Lipschitz and linear growth condition on the coefficients $\mu(\cdot;\theta)$ and $\sigma(\cdot)$ together with an assumption that $\ex[\xi^2]<\infty,$ see e.g.\ Theorem 1 on p.\ 40 in \cite{gikhman} or Theorem 2.9 on p.\ 289 in \cite{karatzas}. The same condition also implies that $X$ will be a Markov process, see e.g.\ Theorem 1 on p.\ 66 in \cite{gikhman}, time-homogeneity of which can be shown as on pp.\ 106--107 in \cite{gikhman}. Moreover, $X$ will be a diffusion process, see Theorem 2 on p.\ 67 in \cite{gikhman}. Conditions for ergodicity of $X$ and existence of the invariant density are given e.g.\ in Theorem 3 on p.\ 143 in \cite{gikhman}, while those for existence of the transition density $p(\Delta,x,y;\theta),$ as well as its characterisations can be found in \S 13 of Chapter 3 of \cite{gikhman}. Ergodicity is a standard assumption in parameter estimation problems for diffusion processes from discrete time observations, at least in the problems with $\Delta$ fixed. The condition in Assumption \ref{assump_sol} that the support of $\pi(\cdot;\theta)$ for every $\theta\in\Theta$ equals $\mathbb{R}$ is a purely technical one and is needed only in order to avoid extra technicalities when dealing with boundary bias effects characteristic of kernel density estimators. This condition is for instance satisfied in case when the process $X$ is an Ornstein-Uhlenbeck process,
\begin{equation}
\label{ou}
dX_t=-\theta X_t dt+\sigma dW_t,
\end{equation}
with $\theta>0$ and known $\sigma,$ because in this case
$\pi(x;\theta)$ is a normal density with mean $0$ and variance $\sigma^2/(2\theta),$ see Proposition 5.1 on p.\ 219  in \cite{karlin} or Example 4 on p.\ 221 there. For more information on the Ornstein-Uhlenbeck process see Example 6.8 on p.\ 358 in \cite{karatzas} or results on the Ornstein-Uhlembeck process scattered throughout \cite{karlin}. In the financial literature a slight generalisation of the Ornstein-Uhlenbeck process is used to model the dynamics of the short interest rate and the corresponding model is known under the name of the Vasicek model, see for instance pp.\ 350--355 in \cite{musiela}.
A general case when the support of $\pi(\cdot;\theta)$ does not coincide with $\mathbb{R}$ as for instance for the CIR process, where it is equal to $(0,\infty),$ can be dealt with using the same approach as in the present work in combination with a boundary bias correction method that uses a kernel with special properties, see e.g.\ \cite{gasser}. An alternative in the case when the state space of $X$ is $(0,\infty)$ is to use the transformation $Y_t=\log X_t.$ The process $Y$ will have the state space $\mathbb{R}$ and its governing stochastic differential equation can be obtained through It\^{o}'s formula. \qed
\end{rem}

\begin{rem}
\label{rem_mixing}
Assumption \ref{cnd_mixing} implies certain restrictions on the rate of decay of $\alpha$-mixing coefficients $\alpha_{\Delta}(k).$ Conditions yielding information on their rate of decay can be obtained for instance from the corresponding results for $\beta$-mixing coefficients $\beta(s)$ for the process $X.$ A $\beta$-mixing coefficient $\beta(s)$ (attributed to Kolmogorov in \cite{rozanov} and alternatively called the absolute regularity coefficient) for the process $X$ is defined as follows,
\begin{equation*}
\beta(s)=\sup_{t\geq 0}\ee\left[ \esssup_{B\in \mathfrak{F}_{\geq t+s}} |\pp(B|\mathfrak{F}_{\leq t})-\pp(B)| \right],
\end{equation*}
where $\mathfrak{F}_{\geq s+t}=\sigma(X_u,u\geq s+t),$ $\mathfrak{F}_{\leq t}=\sigma(X_u,u\leq t)$ and $\pp(\cdot|\mathfrak{F}_{\leq t})$ is the regular conditional probability on $\mathfrak{F}_{\geq t+s}$ given $\mathfrak{F}_{\leq t}$ (the latter will exist in our context by Theorem 3.19 on pp.\ 307--308 in \cite{karatzas}). Theorem 1 in \cite{veretennikov} gives a sufficient condition  on the drift coefficient (satisfied for instance in the case of the Ornstein-Uhlenbeck process), which entails a bound
\begin{equation}
\label{polbound}
\beta(s) \leq C \frac{1}{(1+s)^{\kappa+1}},
\end{equation}
where $C$ is a constant independent of $s$ and $\kappa$ depends in a simple way on the drift coefficient. An $\alpha$-mixing coefficient $\alpha(s)$ (introduced in \cite{rosenblatt}) is defined as
\begin{equation*}
\alpha(s)=\sup_{t\geq 0} \sup_{A\in \mathfrak{F}_{\leq t},B\in \mathfrak{F}_{\geq t+s} } |\mathbb{P}(AB)-\mathbb{P}(A)\mathbb{P}(B)|.
\end{equation*}
The following inequality is well-known: $2\alpha(s)\leq\beta(s),$ see Proposition 1 on p.\ 4 in \cite{doukhan}. Since one trivially has $\alpha_{\Delta}(k)\leq\alpha(k\Delta),$ it follows that $\alpha_{\Delta}(k)\leq(1/2)\beta(k\Delta).$ Therefore, by \eqref{polbound} in this case $\sum_{k=0}^{\infty} \alpha_{\Delta}(k)<\infty,$ i.e.\ the requirement in Assumption \ref{cnd_mixing} will hold. \qed
\end{rem}

\begin{rem}
\label{rem_smoothness} This remark deals with Assumption \ref{cnd_smoothness}. Viewing $\theta$ as fixed, conditions under which the invariant density $\pi(x;\theta)$ is infinitely differentiable with respect to $x$ can be found in Theorem 3 of \cite{kusuoka}. In simple cases like that of the Ornstein-Uhlenbeck \eqref{ou}, the regularity assumptions can and have to be checked by a direct calculation. The requirement that $\alpha>3$ is needed in order to establish Theorem \ref{rootn}. Under Assumption \ref{cnd_smoothness} the stationary density $\pi(\cdot;\theta)$ belongs to the Nikol'ski class of functions $\mathcal{H}(\alpha,L)$ as defined e.g.\ in Definition 1.4 in \cite{tsybakov}.  Another possibility is to assume that the invariant density $\pi(\cdot;\theta)$ is $\alpha$ times differentiable with continuous, bounded and square integrable derivative of order $\alpha,$ see e.g.\ paragraph VI.4 on p.\ 79 and Theorem VI.5 on p.\ 80 in \cite{bosq}. In case the weight function $w$ has a compact support, Lemma \ref{mise} (which is a basic result used in the proofs of the main statements of the paper) can also be proved under the assumption
\begin{equation*}
\forall x,t\in\mathbb{R}, \quad |\pi^{(\alpha)}(x+t;\theta) - \pi^{(\alpha)}(x;\theta) | \leq L_{\theta} ,
\end{equation*}
i.e.\ the assumption that the density $\pi(\cdot;\theta)$ belongs to the H\"{o}lder class $\Sigma(\alpha,L)$ as defined e.g.\ in Definition 1.2 in \cite{tsybakov}. However, if $w$ has compact support, in our analysis we will not be using all the information supplied by the stationary density. This might require stronger conditions on the drift and dispersion coefficients $\mu(\cdot;\cdot)$ and $\sigma(\cdot)$ in order for the identifiability condition \eqref{identifiability} in the statement of Theorem \ref{rootn} hold true and hence it is preferable to keep $w$ general. \qed
\end{rem}

\begin{rem}
\label{rem_kernel}
Assumption \ref{cnd_kernel} is a standard condition in kernel estimation, see e.g.\ p.\ 13 in \cite{tsybakov}. The kernel $K$ satisfying Assumption \ref{cnd_kernel} is called a kernel of order $\alpha.$ For a method of its construction see Section 1.2.2 in \cite{tsybakov}. \qed
\end{rem}

\begin{rem}
\label{rem_h}
Assumption \ref{cnd_h} is needed in order to establish consistency of the estimators $\hat{\pi}(\cdot)$ and $\hat{\pi}^{\prime}(\cdot),$ see Lemma \ref{mise}. \qed
\end{rem}

\begin{rem}
\label{rem_w}
This remark deals with Assumption \ref{cnd_w}. In practice when implementing the estimator of $\hat{\theta}_n,$ one would typically use $w$ with compact support. See Section \ref{simulations} for details. \qed
\end{rem}

\begin{rem}
\label{rem_banon}
Sufficient conditions guaranteeing \eqref{diffeq0} in Assumption \ref{assump_characterisation} can be gleaned from \cite{banon}, see Lemma 3.2 there, and involve regularity conditions on the drift coefficient $\mu(\cdot;\cdot)$ and dispersion coefficient $\sigma(\cdot).$ Note that for simple cases like the Ornstein-Uhlenbeck process \eqref{ou}, where an explicit formula for the invariant density is available, Assumption \ref{assump_characterisation} can also be verified directly. \qed
\end{rem}

\begin{rem}
\label{rem_drift_dispersion}
Conditions on the drift and dispersion coefficients made in Assumption \ref{cnd_drift_volatility} are used to prove asymptotic results of the paper. With an appropriate choice of the weight function $w(\cdot)$ they are satisfied in a number of interesting examples, for instance in the case of the Ornstein-Uhlenbeck process \eqref{ou} with $\theta>0$ unknown and $\sigma$ known. Examination of the proofs shows that complicated conditions in Assumption \ref{cnd_drift_volatility} can be significantly simplified if the weight function $w$ is taken to have a compact support. Note also that because of a great flexibility in selection of the weight function $w,$ Assumption \ref{cnd_drift_volatility} will be satisfied in a large number of examples. \qed
\end{rem}

\section{Consistency}
\label{results}

Let $K$ be a kernel function and a number $h>0$ (that depends on $n$) be a bandwidth. To construct our estimator of $\theta_0,$ we first need to construct a nonparametric estimator of the stationary density $\pi(\cdot;\theta_0).$ The stationary density $\pi(\cdot;\theta_0)$ will be estimated by a kernel density estimator
\begin{equation*}
\hat{\pi}(x)=\frac{1}{(n+1)h} \sum_{j=0}^n K\left( \frac{x-Z_j}{h} \right),
\end{equation*}
while $\hat{\pi}^{\prime}(\cdot)$ will serve as an estimator of $\pi^{\prime}(\cdot;\theta_0)$ (we assume that $K(\cdot)$ is differentiable). Kernel density estimators are among the most popular nonparametric density estimators, see e.g.\ Chapter 1 in \cite{tsybakov} for an introduction in the i.i.d.\ case and Section 2 in Chapter 4 of \cite{gyorfi} for the case of dependent identically distributed observations.

In the sequel we will need to know the convergence rate of the estimator $\hat{\pi}(\cdot)$ and its derivative $\hat{\pi}^{\prime}(\cdot)$ in the weighted $L_2$-norm with weight function $w.$ As usual in nonparametric density estimation, to that end some degree of smoothness of the stationary density $\pi(\cdot;\cdot),$ as well as appropriate conditions on the kernel $K,$ bandwidth $h$ and weight function $w$ are needed. These are supplied in Section \ref{assumptions}. Furthermore, to establish useful asymptotic properties of the estimators $\hat{\pi}(\cdot)$ and its derivative $\hat{\pi}^{\prime}(\cdot),$ some further assumptions on the observations have to be made. We will assume that the sequence $Z_j=X_{j\Delta}$ is strongly mixing with mixing coefficients satisfying a condition spelled out in Section \ref{assumptions}.

The following result holds true.

\begin{lemma}
\label{mise}
Under Assumptions \ref{cnd_theta}--\ref{cnd_w} we have
\begin{equation}
\label{mise1}
\ex\left[ \int_{\mathbb{R}} (\hat{\pi}(x)-\pi(x;\theta_0))^2 w(x) dx \right] \lesssim h^{2\alpha}+\frac{1}{nh^{2}},
\end{equation}
and
\begin{equation}
\label{mise3}
\ex\left[ \int_{\mathbb{R}} (\hat{\pi}^{\prime}(x)-\pi^{\prime}(x;\theta))^2 w(x) dx \right] \lesssim h^{2(\alpha-1)}+\frac{1}{nh^{4}}.
\end{equation}
\end{lemma}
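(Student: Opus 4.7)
The plan is to use the bias--variance decomposition for kernel density estimators, adapted to the strongly mixing setting. By Fubini (all integrands are nonnegative) and stationarity,
\[
\ex\left[\int_{\mathbb R}(\hat\pi(x)-\pi(x;\theta_0))^2 w(x)\, dx\right] = \int_{\mathbb R}(\ex[\hat\pi(x)]-\pi(x;\theta_0))^2 w(x)\, dx + \int_{\mathbb R}\var(\hat\pi(x))\, w(x)\, dx,
\]
and the analogous decomposition works for $\hat\pi'$, so it suffices to bound each of the four pieces.

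For the bias of $\hat\pi$, strict stationarity (Assumption \ref{assump_sol}) gives $\ex[\hat\pi(x)] = \int K(u)\pi(x-hu;\theta_0)\, du$. I would apply the integral-form Taylor expansion of $\pi(x-hu;\theta_0)$ around $x$ to order $\alpha$; the vanishing-moment property of $K$ (Assumption \ref{cnd_kernel}) annihilates every polynomial term, leaving the remainder
\[
\ex[\hat\pi(x)]-\pi(x;\theta_0) = \frac{(-h)^\alpha}{(\alpha-1)!}\int K(u)\, u^\alpha \int_0^1 (1-s)^{\alpha-1}\bigl[\pi^{(\alpha)}(x-shu;\theta_0)-\pi^{(\alpha)}(x;\theta_0)\bigr]\, ds\, du.
\]
Squaring, applying Cauchy--Schwarz in the measure $|K(u)|(1-s)^{\alpha-1}\, ds\, du$, integrating against $w(x)$, and exchanging order of integration via Fubini reduces the problem to uniformly estimating $\int_{\mathbb R}\bigl(\pi^{(\alpha)}(x-t;\theta_0)-\pi^{(\alpha)}(x;\theta_0)\bigr)^2 w(x)\, dx$ in $t$; since $w$ is bounded (Assumption \ref{cnd_w}), this is majorised by $\|w\|_\infty L_{\theta_0}^2$ via the Nikol'ski-type Assumption \ref{cnd_smoothness}. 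The prefactor $h^{2\alpha}$ then gives $\int \mathrm{Bias}^2 w \lesssim h^{2\alpha}$. For the bias of $\hat\pi'$ the identical argument on $\pi'$ consumes one fewer vanishing moment (the smoothness bound is on $\pi^{(\alpha)}=(\pi')^{(\alpha-1)}$), giving the prefactor $h^{\alpha-1}$ and the squared $L^2(w)$ rate $h^{2(\alpha-1)}$.

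For the variance, write $K_j(x) := K((x-Z_j)/h)$ and expand
\[
\var(\hat\pi(x)) = \frac{1}{(n+1)^2 h^2}\sum_{i,j=0}^n \cov(K_i(x), K_j(x)).
\]
The diagonal terms are bounded by $\ex[K_0(x)^2]/((n+1)h^2)$; a change of variable shows $\ex[K_0(x)^2] = h\int K^2(u)\pi(x-hu;\theta_0)\, du$, and integrating the resulting bound against $w$ yields $O(1/(nh))$ by boundedness of $\pi$ and integrability of $w$. For the off-diagonal terms I would invoke Davydov's covariance inequality for strongly mixing sequences together with the crude bound $|K|\leq \|K\|_\infty$, obtaining $|\cov(K_i(x),K_j(x))|\leq 4\|K\|_\infty^2\alpha_\Delta(|i-j|)$; summability in Assumption \ref{cnd_mixing} bounds the total off-diagonal sum by $O(n)$, so this contribution to $\var(\hat\pi(x))$ is $O(1/(nh^2))$ uniformly in $x$, which dominates. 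Integrating against $w$ and combining with the bias proves \eqref{mise1}. For $\hat\pi'$ the calculation is identical with $K$ replaced by $K'$ and the normalisation $1/h$ by $1/h^2$, producing variance $O(1/(nh^4))$ and hence \eqref{mise3}.

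The main technical point is the off-diagonal covariance control: the sup-norm bound on $K$ and $K'$ (Assumption \ref{cnd_kernel}) combined with the summability of $\alpha_\Delta$ (Assumption \ref{cnd_mixing}) is precisely what delivers the stated rates. This estimate is crude (a sharper Davydov inequality using fractional moments of $K$ would improve the variance to the classical $1/(nh)$) but will be sufficient for the subsequent $\sqrt n$-consistency and asymptotic normality results once combined with the bandwidth condition in Assumption \ref{cnd_h}.
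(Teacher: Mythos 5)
Your proposal is correct and follows essentially the same route as the paper: the weighted bias--variance decomposition, the order-$\alpha$ Taylor/vanishing-moment argument under the Nikol'ski condition for the bias (the paper simply cites Proposition 1.5 of Tsybakov for this), the diagonal variance terms handled by a change of variables, and the off-diagonal terms controlled by the covariance inequality for bounded strongly mixing variables together with the summability of $\alpha_{\Delta}(k)$, yielding the $1/(nh^2)$ (resp.\ $1/(nh^4)$) rate. Even your closing remark that a sharper covariance inequality would recover the classical $1/(nh)$ variance matches the paper's own Remark on this point.
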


\begin{rem}
\label{rem_mise}
The bound in inequality \eqref{mise1}, and by extension in inequality \eqref{mise3}, can be sharpened by using more refined arguments in the proof of Lemma \ref{mise}, such as Theorem 3 on p.\ 9 in \cite{doukhan}. However, the `usual' order bound on the mean integrated squared error in kernel density estimation for i.i.d.\ observations when the unknown density is `smooth of order $\alpha$', i.e.\
\begin{equation}
\label{mise_usual}
\ex\left[ \int_{\mathbb{R}} (\hat{\pi}(x)-\pi(\theta,x))^2 w(x) dx \right] \lesssim h^{2\alpha}+\frac{1}{nh},
\end{equation}
see e.g.\ Theorem 1.3 in \cite{tsybakov}, does not seem to be obtainable without further conditions. For dependent observations the bound \eqref{mise_usual} is true by Theorem 3.3 in \cite{viennet}, which, however, is proved under $\beta$-mixing assumption on observations (which is stronger than $\alpha$-mixing) and some extra condition on the $\beta$-mixing coefficients (see also \cite{gourieroux} and \cite{krist} for related results). The proof of a similar result in \cite{vieu} under $\alpha$-mixing assumption and some complicated conditions on the mixing coefficients, see Theorem 2.2 there, is unfortunately incorrect: the assumption (2.3b) in that paper is impossible to satisfy unless the observations are independent, formula (A.9) contains a mistake and formula (9.2) requires some further conditions in order to hold. \qed
\end{rem}

Let the estimator $\hat{\theta}_n$ of $\theta_0$ be defined by \eqref{est_theta}.

\begin{rem}
\label{rem_minimiser}
Under our assumptions in Section \ref{assumptions} the criterion function $R_n(\theta)$ from \eqref{crf} is a continuous function of $\theta$ and hence by compactness of $\Theta$ a minimiser of $R_n(\theta)$ over $\theta\in\Theta$ exists. Consequently, so does the estimator $\hat{\theta}_n,$ although it might be non-unique. Moreover, the estimator $\hat{\theta}_n$ will be a measurable function of the observations $Z_0,Z_1,\ldots,Z_n$ and hence when dealing with convergence properties of $\hat{\theta}_n,$ the use of outer probability, will not be needed. Observe that $\hat{\theta}_n,$ being defined through a minimisation procedure, is an M-estimator, see e.g.\ Chapter 5 in \cite{vdvaart}. \qed
\end{rem}

\begin{rem}
\label{rem_esteq}
An approach to parameter estimation for stochastic differential equations that is based on estimating equations as described in Section \ref{intro} in practice might suffer from non-uniqueness of a parameter estimate, i.e.\ non-uniqueness of a root of the estimating equations. `Wrong' selection of a root of the estimating equations might even render the estimator inconsistent, see e.g.\ remarks on pp.\ 70--71 in \cite{vdvaart}. For a thorough discussion of the multiple root problems and possible remedies for them see \cite{small}. On the other hand, an approach based on maximisation of a criterion function, such as the one advocated in the present work, is less prone to failures of this type. \qed
\end{rem}

\begin{rem}
\label{rem_estex}
In many interesting models, in particular in those where the drift coefficient $\mu(\cdot;\cdot)$ is linear in $\theta,$ the estimator $\hat{\theta}_n$ will have a simple expression. For instance, one can check that for the Ornstein-Uhlenbeck process \eqref{ou} with $\theta>0$ unknown and and $\sigma=1$ known, the estimator $\hat{\theta}_n$ of the true parameter value $\theta_{0}$ is given by
\begin{equation}
\label{thetaou}
\hat{\theta}_n = - \frac{1}{2} \frac{ \int_{\mathbb{R}} x\hat{\pi}(x)  \hat{\pi}^{\prime}(x) w(x) dx }{ \int_{\mathbb{R}} x^2 \hat{\pi}^2(x) w(x)dx  }.
\end{equation}
Compare this expression to the rather complex and nonlinear score function for the same model as given on p.\ 77 in \cite{kessler}, which is used as an estimating function when $\theta$ is estimated by the maximum likelihood method and which requires use of some numerical root finding technique for the computation of the estimator. A general conclusion that can be drawn from this and other similar examples is that our approach in many interesting examples will provide explicit estimators. However, it should be noted that from the point of view of numerical stability, evaluation of the estimator through expressions such as in \eqref{thetaou} cannot be recommended in practice. Rather, one should approximate the criterion function $R_n(\cdot)$ through a Riemann sum and next compute from this approximation the estimator $\hat{\theta}_n$ as a weighted least squares estimator. When $\mu(\cdot;\cdot)$ is linear in $\theta,$ the problem will further reduce to a standard task of computing the weighted least squares estimator in the linear regression model. Finally, we remark that with a proper implementation of the nonparametric kernel estimators $\hat{\pi}_n(\cdot)$ and $\hat{\pi}_n^{\prime}(\cdot),$ computational effort for their evaluation is very modest; see e.g.\ \cite{marron}. \qed
\end{rem}

\begin{rem}
\label{rem_kessler}
A desire to have simple expressions for estimators based on estimating equations in \cite{kessler} at times leads to unnatural assumptions on the parameter space $\Theta.$ For instance, in Section 6.4 in \cite{kessler} in the model
\begin{equation*}
\begin{cases}
dX_t=-\theta X_t dt+\sqrt{\theta+X_t^2}dW_t,\\
X_0=\xi,
\end{cases}
\end{equation*}
in order to accommodate a simple looking estimator of the true parameter $\theta_0,$ $\theta_0>7/2$ has to be assumed, while the more general condition $\theta_0>0$ appears to be more natural here. On the other hand, the assumption $\theta_0>7/2$ is not needed for our estimator $\hat{\theta}_n$ and $\theta_0>0$ suffices (this model formally does not fit into our framework, because the unknown parameter $\theta$ is also included in the dispersion coefficient of the stochastic differential equation. However, our asymptotic analysis holds for this model as well). \qed
\end{rem}

It can be expected that as $n\rightarrow\infty,$ for every $\theta\in\Theta$ the criterion function $R_n(\theta)$ converges in some appropriate sense  to the limit criterion function
\begin{equation*}
R(\theta)=\int_{\mathbb{R}}\left( \mu(x;\theta){\pi}(x;\theta_0) - \frac{1}{2}\left[ \sigma^2(x) {\pi}(x;\theta_0) \right]^{\prime} \right)^2 w(x)dx.
\end{equation*}
Note that by our assumptions $R(\theta_0)=0$ and that $R(\theta)\geq 0$ for $\theta\in\Theta.$ Hence the parameter value $\theta_0$ is a minimiser of the asymptotic criterion function $R(\theta)$ over $\theta\in\Theta.$ Under suitable identifiability conditions it can be ensured that $\theta_0$ is the unique minimiser of $R(\theta).$ Next, if convergence of $R_n(\theta)$ to $R(\theta)$ is strong enough, a minimiser of $R_n(\theta)$ will converge to a minimiser of $R(\theta)$. Said another way, $\hat{\theta}_n$ will be consistent for $\theta_0.$ This is a standard approach to prove consistency of M-estimators, see e.g.\ Section 5.2 in \cite{vdvaart}.

In order to carry out the above programme for the proof of consistency of $\hat{\theta}_n,$ we need that the drift coefficient $\mu(\cdot;\cdot)$ and the dispersion coefficient $\sigma(\cdot)$ satisfy certain regularity conditions. These are listed in Section \ref{assumptions}. Then the following theorem holds true.

\begin{thm}
\label{consistency}
Under Assumptions \ref{cnd_theta}--\ref{cnd_drift_volatility} and the additional identifiability condition
\begin{equation}
\label{identifiability}
\forall \varepsilon>0, \quad \inf_{\theta:|\theta-\theta_0|\geq\varepsilon} R(\theta) > R(\theta_0),
\end{equation}
the estimator $\hat{\theta}_n$ is weakly consistent: $\hat{\theta}_n \convp \theta_0.$
\end{thm}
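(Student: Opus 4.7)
The plan is to follow the standard M-estimator template (cf.\ Theorem 5.7 in \cite{vdvaart}): first establish uniform convergence in probability $\sup_{\theta \in \Theta} |R_n(\theta) - R(\theta)| \convp 0$, and then combine this with compactness of $\Theta$ and the well-separated minimiser condition \eqref{identifiability} to conclude $\hat\theta_n \convp \theta_0$. The implication from uniform convergence to consistency is routine: the defining inequality $R_n(\hat\theta_n) \leq R_n(\theta_0)$ together with uniform convergence forces $R(\hat\theta_n) \to R(\theta_0) = 0$, and then \eqref{identifiability} rules out any subsequential limit of $\hat\theta_n$ different from $\theta_0$.

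The real work will lie in the uniform convergence step. I would abbreviate
\[
g_n(x;\theta) = \mu(x;\theta)\hat\pi(x) - \tfrac{1}{2}[\sigma^2(x)\hat\pi(x)]', \qquad g(x;\theta) = \mu(x;\theta)\pi(x;\theta_0) - \tfrac{1}{2}[\sigma^2(x)\pi(x;\theta_0)]',
\]
so that $R_n(\theta) - R(\theta) = \int (g_n - g)(g_n + g)\,w\,dx$, and apply the Cauchy--Schwarz inequality in the weighted $L^2$-space to obtain
\[
|R_n(\theta)-R(\theta)| \leq \Bigl(\int (g_n-g)^2 w\,dx\Bigr)^{1/2}\Bigl(\int (g_n+g)^2 w\,dx\Bigr)^{1/2}.
\]
Expanding the derivative produces
\[
g_n(x;\theta)-g(x;\theta) = \mu(x;\theta)\bigl(\hat\pi(x)-\pi(x;\theta_0)\bigr) - \sigma(x)\sigma'(x)\bigl(\hat\pi(x)-\pi(x;\theta_0)\bigr) - \tfrac{1}{2}\sigma^2(x)\bigl(\hat\pi'(x)-\pi'(x;\theta_0)\bigr),
\]
three terms each linear in $\hat\pi-\pi(\cdot;\theta_0)$ or $\hat\pi'-\pi'(\cdot;\theta_0)$. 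Bounding $|\mu(\cdot;\theta)|$ by the envelope $\widetilde\mu_1(\cdot)$ from Assumption \ref{cnd_drift_volatility} and invoking the boundedness and integrability of $\widetilde\mu_1^2 w$, $\sigma^2(\sigma')^2 w$, and $\sigma^4 w$, I can apply Lemma \ref{mise} to deduce that $\ex\sup_{\theta \in \Theta}\int(g_n-g)^2 w\,dx$ is of order $h^{2(\alpha-1)} + 1/(nh^4)$, which tends to zero by Assumption \ref{cnd_h}. An analogous but easier estimate gives $\sup_{\theta \in \Theta}\int (g_n+g)^2 w\,dx = O_{\mathbb{P}}(1)$, and Markov's inequality then delivers the desired uniform convergence.

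Continuity of $R$, required to make the identifiability condition meaningful, is inherited from joint continuity of $\mu$ and $\sigma$ in $(x,\theta)$ together with the envelope bounds, which legitimise passage of limits under the integral by dominated convergence. The main technical obstacle will be the uniformity in $\theta$: the supremum must be pulled inside both the integral in $x$ and the expectation in $\omega$, and this is precisely what the long list of boundedness and integrability statements in Assumption \ref{cnd_drift_volatility}, packaged via the envelopes $\widetilde\mu_j$, is engineered to make routine.
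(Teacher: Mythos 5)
Your proposal follows essentially the same route as the paper: the Cauchy--Schwarz factorisation of $R_n(\theta)-R(\theta)$ into a difference factor and a sum factor, the envelope bound $\sup_{\theta}|\mu(x;\theta)|\leq\widetilde{\mu}_1(x)$ to remove the supremum over $\theta$, an application of Lemma \ref{mise} with modified weight functions to control both factors, and the conclusion via Theorem 5.7 of \cite{vdvaart} under \eqref{identifiability}. The only item the paper adds that you omit is an explicit measurability argument for $\hat{\theta}_n$ (via Jennrich's lemma), which is a minor technical point rather than a gap in the main argument.
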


\begin{rem}
\label{rem_identifiability}
The identifiability condition \eqref{identifiability} is standard in M-estimation, see e.g.\ a discussion in Section 5.2 in \cite{vdvaart}. It means that a point of minimum of the asymptotic criterion function is a well-separated point of minimum. Since under our conditions the asymptotic criterion function $R(\theta)$ is a continuous function of $\theta$ and $\Theta$ is compact, uniqueness of a global minimiser of $R(\theta)$ over $\theta$ will imply \eqref{identifiability}, cf.\ Problem 5.27 on p.\ 84 in \cite{vdvaart}. As one  particular example, one can check that condition \eqref{identifiability} is satisfied for the Ornstein-Uhlenbeck process \eqref{ou}, assuming that $\theta$ is unknown, while $\sigma$ is known. \qed
\end{rem}

\begin{thm}
\label{rootn}
Let the assumptions of Theorem \ref{consistency} hold and let additionally $\theta_0$ be an interior point of $\Theta.$ If $h\asymp n^{-\gamma}$ with $\gamma=1/(2\alpha)$ and
$\ddot{R}(\theta_0)\neq 0,$
then 
$\sqrt{n}(\hat{\theta}_n-\theta_0)=O_{\mathbb{P}}(1).$
\end{thm}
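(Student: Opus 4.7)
Since $\theta_0$ is interior to $\Theta$ and $\hat\theta_n\convp\theta_0$ by Theorem \ref{consistency}, the first-order condition $\dot R_n(\hat\theta_n)=0$ holds with probability tending to $1$. A Taylor expansion then gives
\begin{equation*}
0 = \dot R_n(\theta_0) + \ddot R_n(\tilde\theta_n)\,(\hat\theta_n-\theta_0),
\end{equation*}
for some random $\tilde\theta_n$ between $\theta_0$ and $\hat\theta_n$, so that the conclusion follows from (a) $\ddot R_n(\tilde\theta_n)\convp\ddot R(\theta_0)\neq 0$, and (b) $\dot R_n(\theta_0)=O_\mathbb{P}(n^{-1/2})$.

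For (a), differentiation gives $\ddot R_n(\theta)=2\int[\dot\mu(x;\theta)^2\hat\pi(x)^2+F_n(\theta,x)\ddot\mu(x;\theta)\hat\pi(x)]w(x)\,dx$ with $F_n(\theta,x)=\mu(x;\theta)\hat\pi(x)-\tfrac{1}{2}(\sigma^2(x)\hat\pi(x))'$, and an analogous expression holds for $\ddot R(\theta)$ with $\hat\pi$ replaced by $\pi_0=\pi(\cdot;\theta_0)$. Using the $L^2(w)$-consistency of $\hat\pi$ and $\hat\pi'$ from Lemma \ref{mise}, the dominations in Assumption \ref{cnd_drift_volatility}, and continuity of $\mu,\dot\mu,\ddot\mu$ in $\theta$, a uniform argument yields $\sup_{\theta\in\Theta}|\ddot R_n(\theta)-\ddot R(\theta)|\convp 0$; combined with continuity of $\ddot R$, consistency of $\hat\theta_n$ and the hypothesis $\ddot R(\theta_0)\neq 0$, this gives (a).

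For (b), the characterisation $\mu(\cdot;\theta_0)\pi_0-\tfrac{1}{2}(\sigma^2\pi_0)'=0$ (Assumption \ref{assump_characterisation}) allows me to subtract, so with $e=\hat\pi-\pi_0$
\begin{equation*}
\dot R_n(\theta_0) = 2\int\bigl[\mu(x;\theta_0)e(x)-\tfrac{1}{2}(\sigma^2(x)e(x))'\bigr]\dot\mu(x;\theta_0)\hat\pi(x)\,w(x)\,dx.
\end{equation*}
Splitting $\hat\pi=\pi_0+e$ in the outer factor writes this as $I_1+I_2$, linear and quadratic in $e$ respectively. An integration by parts on the $(\sigma^2 e)'$ term in $I_1$---the boundary terms vanishing by the limit condition on $\widetilde\mu_2\sigma^2 w$ in Assumption \ref{cnd_drift_volatility}---produces $I_1=\int e(x)\psi(x)\,dx$ for an explicit deterministic $\psi$ built from $\mu(\cdot;\theta_0),\dot\mu(\cdot;\theta_0),\sigma,\pi_0,w$. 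Substituting the kernel form of $\hat\pi$ and setting $\psi_h(z)=\int K(u)\psi(z+hu)\,du$,
\begin{equation*}
I_1 = \frac{1}{n+1}\sum_{j=0}^n\bigl[\psi_h(Z_j)-\mathbb{E}\psi_h(Z_0)\bigr] + \int[\mathbb{E}\hat\pi(x)-\pi_0(x)]\psi(x)\,dx.
\end{equation*}
The deterministic bias is $O(h^\alpha)$ by the order-$\alpha$ moment conditions on $K$ (Assumption \ref{cnd_kernel}) and the Nikol'ski-type smoothness of $\pi_0$ (Assumption \ref{cnd_smoothness}); Davydov's covariance inequality under the summable $\alpha$-mixing of Assumption \ref{cnd_mixing} bounds the centred sum's variance by $O(1/n)$, so Chebyshev makes it $O_\mathbb{P}(n^{-1/2})$. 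With $h\asymp n^{-1/(2\alpha)}$ both pieces are $O_\mathbb{P}(n^{-1/2})$.

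The quadratic term $I_2$ is controlled by a constant multiple of $\int e^2 w+(\int e^2 w)^{1/2}(\int (e')^2 w)^{1/2}$ after expanding $(\sigma^2 e)'=2\sigma\sigma'e+\sigma^2 e'$ and applying Cauchy-Schwarz with the bounds in Assumption \ref{cnd_drift_volatility}; Lemma \ref{mise} at $h\asymp n^{-1/(2\alpha)}$ makes these $O_\mathbb{P}(n^{-1+1/\alpha})$ and $O_\mathbb{P}(n^{-1+3/(2\alpha)})$ respectively, so the condition $\alpha>3$ of Assumption \ref{cnd_smoothness}, giving $3/(2\alpha)<1/2$, yields $I_2=o_\mathbb{P}(n^{-1/2})$. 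The main technical obstacle is the integration by parts producing $I_1$: only after eliminating $e'$ does $I_1$ become a linear functional of $\hat\pi-\pi_0$ whose bias the order-$\alpha$ kernel can collapse to $h^\alpha$, and the stringent lower bound $\alpha>3$ is precisely what is needed to absorb the quadratic remainder $I_2$.
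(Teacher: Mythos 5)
Your proposal is correct and follows essentially the same route as the paper: first-order condition plus a mean-value/Taylor expansion, the splitting of $\dot R_n(\theta_0)$ via Assumption \ref{assump_characterisation} into a part linear in $\hat\pi-\pi(\cdot;\theta_0)$ (handled by integration by parts, an $O(h^\alpha)$ bias bound and a Davydov-type covariance inequality under summable mixing) and a quadratic remainder controlled by Cauchy--Schwarz and Lemma \ref{mise}, with $\alpha>3$ entering exactly where you say it does. The only cosmetic difference is that for the Hessian factor you invoke uniform convergence of $\ddot R_n$ over $\Theta$, whereas the paper proves convergence at $\theta_0$ and bounds the discrepancy at the intermediate point via $\dddot R_n$ and the envelope functions of Assumption \ref{cnd_drift_volatility}; both are standard and supported by the hypotheses.
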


\begin{rem}
\label{rem_secondder}
The assumption $\ddot{R}(\theta_0)\neq 0$ is satisfied in a number of important examples, for instance in the case of the Ornstein-Uhlenbeck process \eqref{ou} with $\theta>0$ unknown and known $\sigma.$ \qed
\end{rem}

\begin{rem}
\label{rem_switch} Under appropriate conditions, by the same method as studied in the present work, one can also handle the case when the drift coefficient $\mu(\cdot;\theta)$ does not depend on parameter $\theta,$ while the dispersion coefficient does. \qed
\end{rem}

\begin{rem}
\label{rem_extension}
In the present paper we assumed that the dispersion coefficient $\sigma(\cdot)$ was known. In practice this is not always a realistic assumption. A possible extension of the smooth and match method to this more general setting is to assume that $\sigma(\cdot)$ is a totally unknown function, to estimate it nonparametrically and next to define an estimator of the parameter of interest $\theta_0$ again via an expression \eqref{est_theta}, but with $\sigma(\cdot)$ replaced by its nonparametric estimator $\hat{\sigma}(\cdot)$ in $R_n(\theta).$ Under appropriate assumptions this approach should again yield a $\sqrt{n}$-consistent estimator of $\theta_0,$ although some nontrivial technicalities can be anticipated. \qed
\end{rem}

\begin{rem}
\label{rem_dband} Theorem \ref{rootn} holds also for bandwidth sequences $h\asymp n^{-\gamma}$ with $\gamma$ other than $1/(2\alpha).$ However, $\gamma$ cannot be arbitrary, for this might lead to violation of consistency of $\hat{\pi}(\cdot)$ and $\hat{\pi}^{\prime}(\cdot),$ see Lemma \ref{mise}. The condition on the bandwidth sequence in the statement of Theorem \ref{rootn} is of an asymptotic nature and is not directly applicable in practice. In practical applications a simple method called the quasi-optimality method is likelily to produce reasonable results, see e.g.\ \cite{reiss2} for more information. See also the results of the simulation examples considered in Section \ref{simulations}. \qed
\end{rem}

\section{Asymptotic normality}
\label{normality}
Examination of the proof of Lemma \ref{t1t2t3t4} in Appendix \ref{appendix}, on which the proof of Lemma \ref{lemma_rootn1} and eventually that of Theorem \ref{rootn} relies, shows that under appropriate extra conditions not only $\sqrt{n}$-consistency of the estimator $\hat{\theta}_n,$ but also its asymptotic normality can be established.

Let
\begin{equation*}
v(x)=2\dot{\mu}(x;\theta_0)\mu(x;\theta_0)\pi(x;\theta_0)w(x)
+\left[ \dot{\mu}(x;\theta_0) \pi(x;\theta_0) w(x) \right]^{\prime} \sigma^2(x).
\end{equation*}
The following result holds true.

\begin{thm}
\label{thm_an}
Let the assumptions of Theorem \ref{consistency} hold (with Assumption \ref{rem_smoothness} strengthened to the requirement $\alpha>4$) and let additionally $\theta_0$ be an interior point of $\Theta.$ Assume that $h\asymp n^{-\gamma}$ with
\begin{equation*}
\frac{1}{2\alpha}<\gamma<\frac{1}{8}.
\end{equation*}
If
\begin{equation}
\label{secondder2}
\ddot{R}(\theta_0)\neq 0, \quad \var[v(Z_0)]+2\sum_{j=1}^{\infty}\cov[v(Z_0),v(Z_j)]>0, \quad \|v^{(\alpha)}\|_{\infty}<\infty,
\end{equation}
and for some $\delta>0,$
\begin{equation}
\label{an_mixing}
\ex[|Z_j|^{2+\delta}]<\infty, \quad \sum_{k=1}^{\infty} (\alpha_{\Delta}(k))^{\delta/(2+\delta)}<\infty,
\end{equation}
then
\begin{equation*}
\sqrt{n+1}(\hat{\theta}_n-\theta_0)\convd \mathcal{N}(0,s^2).
\end{equation*}
Here
\begin{equation*}
s^2=\frac{ \var[v(Z_0)]+2\sum_{j=1}^{\infty}\cov[v(Z_0),v(Z_j)] }{(\ddot{R}(\theta_0))^2}.
\end{equation*}
\end{thm}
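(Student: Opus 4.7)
The plan is to apply the classical M-estimator linearisation. Since $\theta_0$ is an interior point of $\Theta$ and $\hat{\theta}_n\convp\theta_0$ by Theorem \ref{consistency}, with probability tending to one $\hat{\theta}_n$ satisfies the first-order condition $\dot{R}_n(\hat{\theta}_n)=0$. A one-term Taylor expansion then gives
\[
0=\dot{R}_n(\theta_0)+\ddot{R}_n(\tilde{\theta}_n)(\hat{\theta}_n-\theta_0)
\]
for some (random) $\tilde{\theta}_n$ between $\theta_0$ and $\hat{\theta}_n$, so that
\[
\sqrt{n+1}(\hat{\theta}_n-\theta_0)=-\sqrt{n+1}\,\dot{R}_n(\theta_0)\big/\ddot{R}_n(\tilde{\theta}_n).
\]
This reduces the theorem to two ingredients. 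First, $\ddot{R}_n(\tilde{\theta}_n)\convp\ddot{R}(\theta_0)\neq 0$; this follows from pointwise convergence $\ddot{R}_n(\theta_0)\convp\ddot{R}(\theta_0)$ (obtainable as in the proof of Theorem \ref{rootn} via Lemma \ref{mise}) together with equicontinuity of $\theta\mapsto\ddot{R}_n(\theta)$ furnished by Assumption \ref{cnd_drift_volatility} and the consistency of $\hat{\theta}_n$. Second, $\sqrt{n+1}\,\dot{R}_n(\theta_0)\convd\mathcal{N}(0,\sigma_v^2)$ with $\sigma_v^2=\var[v(Z_0)]+2\sum_{j\geq 1}\cov[v(Z_0),v(Z_j)]$; combined with Slutsky's theorem this yields the claim with $s^2=\sigma_v^2/(\ddot{R}(\theta_0))^2$.

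To treat the score, set $\eta(x)=\hat{\pi}(x)-\pi(x;\theta_0)$. Differentiating \eqref{crf}, using \eqref{diffeq0} to annihilate the $\theta_0$-piece of the first factor, and writing $\hat{\pi}=\pi(\cdot;\theta_0)+\eta$ in the surviving factor yields
\[
\dot{R}_n(\theta_0)=\int\eta(x)\,v(x)\,dx+\mathcal{R}_n,
\]
where the leading expression is produced by an integration by parts applied to the $(\sigma^2\eta)^{\prime}$ term: the boundary contributions vanish thanks to $\lim_{|x|\to\infty}\widetilde{\mu}_2(x)w(x)\sigma^2(x)=0$, and the assembly of the remaining factors is precisely the $v$ defined before the statement. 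The remainder $\mathcal{R}_n$ is a finite sum of integrals that are quadratic in $\eta$ and $\eta^{\prime}$.

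For the leading term, a change of variables gives
\[
\int\eta(x)v(x)\,dx=\frac{1}{n+1}\sum_{j=0}^{n}\bigl[v_h(Z_j)-\ee v_h(Z_0)\bigr]+\bigl[\ee v_h(Z_0)-\ee v(Z_0)\bigr],
\]
where $v_h(z)=\int K(u)v(z+hu)\,du$. The order-$\alpha$ property of $K$ combined with $\|v^{(\alpha)}\|_{\infty}<\infty$ gives $\|v_h-v\|_{\infty}=O(h^{\alpha})$; together with the lower bandwidth bound $\gamma>1/(2\alpha)$ this makes $\sqrt{n+1}$ times the deterministic bias $o(1)$. The centred stationary sum is handled by Ibragimov's central limit theorem for strongly mixing sequences, which applies thanks to the moment and mixing summability conditions \eqref{an_mixing} and gives the asymptotic variance $\sigma_v^2$; positivity is one of the hypotheses in \eqref{secondder2}.

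The main obstacle is to show that $\sqrt{n+1}\,\mathcal{R}_n\convp 0$. Each component of $\mathcal{R}_n$ is, after at most one integration by parts and Cauchy--Schwarz, controlled by a product of two factors chosen from $\|\eta\|_{L^2(w)}$ and $\|\eta^{\prime}\|_{L^2(w)}$, times $L^2(w)$-norms of expressions that are bounded under Assumption \ref{cnd_drift_volatility}. Lemma \ref{mise} gives
\[
\|\eta\|_{L^2(w)}^2=O_p\bigl(h^{2\alpha}+(nh^{2})^{-1}\bigr),\qquad \|\eta^{\prime}\|_{L^2(w)}^2=O_p\bigl(h^{2(\alpha-1)}+(nh^{4})^{-1}\bigr),
\]
so with $h\asymp n^{-\gamma}$ the critical variance contribution is of order $n^{4\gamma-1}$ coming from $\|\eta^{\prime}\|_{L^2(w)}^2$. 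Forcing $n^{4\gamma-1}=o(n^{-1/2})$ produces exactly the upper constraint $\gamma<1/8$, while the strengthened smoothness $\alpha>4$ both makes $1/(2\alpha)<1/8$ nonempty and renders the bias contributions to $\mathcal{R}_n$ negligible. Plugging $\sqrt{n+1}\,\dot{R}_n(\theta_0)\convd\mathcal{N}(0,\sigma_v^2)$ and $\ddot{R}_n(\tilde{\theta}_n)\convp\ddot{R}(\theta_0)$ back into the Taylor identity closes the argument.
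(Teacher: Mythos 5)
Your proposal is correct and follows essentially the same route as the paper: linearise via the first-order condition and a mean-value expansion (the content of Theorem \ref{rootn} and Lemmas \ref{lemma_rootn1}--\ref{lemma_rootn2}), reduce the score to $\sqrt{n+1}\int v(x)(\hat{\pi}(x)-\pi(x;\theta_0))\,dx$ plus quadratic-in-$\eta$ remainders killed by Lemma \ref{mise} under $1/(2\alpha)<\gamma<1/8$, and then apply Ibragimov's CLT for strongly mixing sequences after disposing of the $v_h-v$ bias via the order-$\alpha$ kernel and $\|v^{(\alpha)}\|_{\infty}<\infty$. The only cosmetic differences are that the paper uses the integral form of the mean-value theorem rather than an intermediate point $\tilde{\theta}_n$, and handles the $v_h-v$ fluctuation by a separate Chebyshev step rather than your sup-norm bound; both are equivalent.
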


\section{One-step Newton-Raphson type procedure}
\label{onestep}

Although according to Theorems \ref{rootn} and \ref{thm_an} the estimator $\hat{\theta}_n$ is $\sqrt{n}$-consistent and even asymptotically normal, it is obviously not necessarily asymptotically the best one, which in the present model and observation scheme is typically the case for Z-estimators based on martingale estimating equations as well. Here we interpret asymptotically the best estimator as the one that is regular and has the smallest possible asymptotic variance among all regular estimators, see e.g.\ Chapter 8 in \cite{vdvaart} for an exposition of the asymptotic efficiency theory in the i.i.d.\ setting. Under regularity conditions the maximum likelihood estimator achieves the efficiency bound. As far as Z-estimators in diffusion models are concerned, a line of research in the literature is to try to choose estimating equations within a certain class of functions in an optimal way, see e.g.\ \cite{m_sorensen}, \cite{jacobsen} and \cite{kessler}. However, most of the work in this direction deals with the high frequency data setting where $\Delta=\Delta_n\rightarrow 0$ as $n\rightarrow\infty.$ In our case optimal choice of the estimating equations would correspond to the problem of an optimal choice of the weight function $w(\cdot)$ within a certain class of weight functions. This is not an easy problem to solve and it is a priori not clear whether this approach would lead to a simple and feasible optimal weight function $w_{opt}.$ A possibly better and more direct approach to improving asymptotic performance of the estimator $\hat{\theta}_n$ is to use it as a starting point of a one-step Newton-Rapshon type procedure. The idea is well-known in statistics, see e.g.\ Section 5.7 in \cite{vdvaart}, and is as follows: consider an estimating equation $\Psi_n(\theta)=0.$ Given a preliminary estimator $\hat{\theta}_n,$ define a one-step estimator $\overline{\theta}_n$ of $\theta_0$ as a solution in $\theta$ to the equation
\begin{equation}
\label{eq1step}
\Psi_n(\hat{\theta}_n)+ \dot{\Psi}_n(\hat{\theta}_n) (\theta-\hat{\theta}_n)=0,
\end{equation}
where $\dot{\Psi}_n(\cdot)$ is the derivative of $\Psi_n(\cdot)$ with respect to $\theta.$ This corresponds to replacing $\Psi_n(\theta)$ with its tangent at $\hat{\theta}_n$ and when iterated several times, each time using as a new starting point the previously found solution to \eqref{eq1step}, is known in numerical analysis under the name of the Newton (or Newton-Raphson) method, see e.g.\ Section 2.3 in \cite{burden}. This method is used to find zeroes of nonlinear equations. In statistics, on the other hand, just one such iteration suffices to obtain an estimator that is as good asymptotically as the one defined by the estimating equation $\Psi_n(\theta)=0,$ provided the preliminary estimator is already $\sqrt{n}$-consistent (a precise result can be found in Theorem 5.45 in \cite{vdvaart}). A computational advantage of a one-step approach over a more direct maximum likelihood approach is that often a preliminary $\sqrt{n}$-consistent estimator is easy to compute, while the computational time required for one Newton-Raphson type iteration step is negligible. 

Under suitable conditions one can use in the capacity of $\Psi_n$ the martingale estimating functions, see e.g.\ \cite{m_sorensen}, or even the score function $S_n(\theta)$ (i.e.\ a gradient of the likelihood function with respect to the unknown parameter $\theta$), provided the required derivatives of $\Psi_n$ can be evaluated either analytically or numerically in a quick and numerically stable way. The estimator $\hat{\theta}_n$ can thus be upgraded to an asymptotically efficient one. We omit a detailed discussion and a precise statement to save space and will simply note that the regularity conditions required to justify the one-step method are mild enough in our case (as an example, they are satisfied in the case of the Ornstein-Uhlenbeck process).

\section{Simulations}
\label{simulations}

In this section we present results of a small simulation study that we performed using the Ornstein-Uhlenbeck process \eqref{ou} as a test model. This study is in no way exhaustive and the results obtained merely serve as an illustration of the theoretical results from Sections \ref{results}--\ref{onestep}.

Three required components for the construction of our estimator $\hat{\theta}_n$ from \eqref{est_theta} are the weight function $w(\cdot),$ the kernel $K$ and the bandwidth $h.$ As a weight function we used a suitably rescaled version of the function
\begin{equation*}
\lambda_{c,\beta}(x)=
\begin{cases}
1, & \text{if $|x|\leq c,$}\\
\exp[-\beta \exp[-\beta/(|x|-c)^2]/(|x|-1)^2], & \text{if $c<|t|<1,$}\\
0, & \text{if $|x| \geq 1,$}
\end{cases}
\end{equation*}
with constants $c$ and $\beta$ equal to $0.7$ and $0.5,$ respectively. This weight function was already used in simulation examples in \cite{gugu}. The rationale for its use is simple: $w$ will be equal to one on a greater part of its support, which comes in handy in computations, while at the same time being smooth. As a kernel we used
\begin{equation*}
K(x)=\left( \frac{105}{64} - \frac{315}{64} x^2 \right) (1-x^2)^2 1_{[|x|\leq 1]},
\end{equation*}
which was also employed in simulation examples in \cite{gugu} and yielded good results there. Finally, in all our examples the bandwidth was selected through the so-called quasi-optimality approach by computing the estimates $\hat{\theta}_n=\hat{\theta}_{n,h}$ for a range of different bandwidths $h$ and then picking the one that brought the least change to the next estimate. In greater detail, for a sequence of bandwidths $\{h^{(i)}\}$ we chose the bandwidth $\hat{h}$ such that
\begin{equation*}
\hat{h}=\operatorname{argmin}_{h^{(i)}} \|\hat{\theta}_{n,h^{(i+1)}}-\hat{\theta}_{n,h^{(i)}}\|
\end{equation*}
and next computed the estimate $\hat{\theta}_{n,\hat{h}}.$ In order not to clutter the notation, in the sequel we will omit the dependence of $\hat{\theta}_{n,\hat{h}}$ on $\hat{h}$ and will simply write $\hat{\theta}_n.$ \cite{reiss2} contains theoretical justification for this method of smoothing parameter selection in nonparametric estimation problems. 

Our goal was to compare the behaviour of our estimator $\hat{\theta}_n,$  the one-step estimator $\overline{\theta}_n$ which was using $\hat{\theta}_n$ as a preliminary estimator, the estimator based on a simple estimating function from formula (29) in \cite{kessler} given by the expression
\begin{equation*}
\theta^{*}_n=\frac{n}{2 \sum_{j=0}^{n-1} X_{j\Delta}^2},
\end{equation*}
and the maximum likelihood estimator $\widetilde{\theta}_n.$ Since the practical performance of the maximum likelihood estimator $\widetilde{\theta}_n$ in the case of the Ornstein-Uhlenbeck process is quite good, while the loss in asymptotic efficiency for the estimator $\theta^{*}_n$ in comparison to $\widetilde{\theta}_n$ is small, the competition with these two estimators was a tough task for our estimator $\hat{\theta}_n.$

All the computations were performed in Wolfram Mathematica 8.0, see \cite{mathematica}. Simulating samples from the Ornstein-Uhlenbeck process is straightforward, since it is an AR(1) process. We took $\theta_0=2$ and $\sigma=1$ and simulated from the process $X$ samples of sizes $100$ and $200$ (thus $n=99$ and $199$) with intervals between successive observations $\Delta=0.01,0.05,0.1$ and $1.$

As a criterion for comparison of different estimators the mean squared error was used. For fixed $\Delta$ and $n$ and for $k=200$ different samples we computed the estimates $\hat{\theta}_n,\overline{\theta}_n,\theta^{*}_n$ and $\widetilde{\theta}_n$ and then for each of $k=200$ estimates $\hat{\theta}_n,\overline{\theta}_n,\theta^{*}_n,\widetilde{\theta}_n$ we evaluated the corresponding mean squared error, that is the sum of the sample variance and sample bias squared (sample mean minus the true parameter value $\theta_0=2$ squared). The support of the weight function $w(\cdot)$ was taken to be the interval $[-1.4,1.4],$ which roughly corresponds to the interval $[-3s_n,3s_n],$ where $s_n$ is the sample standard deviation of the observations. The results obtained from our simulations are reported in Table \ref{table1}, where we also included the theoretical optimal value $\text{EB}$ for the mean squared error that can be obtained from the asymptotic efficiency bound, see Example 3.2 and formula (12) in \cite{kessler}.
\begin{table}
\caption{Mean squared errors for the estimates $\hat{\theta}_n,\overline{\theta}_n,\theta^{*}_n,\widetilde{\theta}_n$ together with the optimal value $\text{EB}$ obtained from the asymptotic efficiency bound in the case of the Ornstein-Uhlenbeck process \eqref{ou} with $\theta_0=2$ and $\sigma=1.$}
\label{table1}
\begin{center}
\begin{tabular}{rr|rrrrr}
$\Delta$ & $n$ & $\hat{\theta}_n$ & $\overline{\theta}_n$ & $\theta^{*}_n$ & $\widetilde{\theta}_n$ & $\text{EB}$\\
\hline
$0.01$ & $99$ & $1.900$ & $11.24$ & $8.545$ & $11.28$ & $4.001$\\
       &$199$ & $2.152$ & $3.774$ & $3.474$ & $3.776$ & $2.000$\\
       \hline
$0.05$ & $99$ & $1.061$ & $1.384$ & $1.371$ & $1.394$ & $0.803$\\
       &$199$ & $0.578$ & $0.647$ & $0.615$ & $0.651$ & $0.401$\\
       \hline
$0.1$  & $99$ & $0.663$ & $0.697$ & $0.677$ & $0.701$ & $0.405$\\
       &$199$ & $0.291$ & $0.204$ & $0.206$ & $0.205$ & $0.203$\\
       \hline
$1$    & $99$ & $0.155$ & $0.067$ & $0.079$ & $0.070$ & $0.080$\\
       &$199$ & $0.093$ & $0.040$ & $0.042$ & $0.040$ & $0.040$
\end{tabular}
\end{center}
\end{table}
A conclusion (modulo the Monte Carlo simulation errors) that lends itself from this table is that for small $\Delta$ the estimator $\hat{\theta}_n$ seems to either outperform other estimators, or to perform just as well as other estimators, but once $\Delta$ and $n$ are sufficiently large, it is itself outperformed by other estimators (our conclusions are also supported by some other simulations not reported here). Curiously enough, for $\Delta=0.01$ and $n=99$ the estimator $\hat{\theta}_n$ beats the asymptotic efficiency bound, although of course its performance is not (and cannot be) particularly good in this case. It is also interesting to note that the maximum likelihood estimator is not the best estimator in all the cases, which should not be surprising, for its superiority over other estimators is in the asymptotic sense only (it is also known to be strongly biased for small $n\Delta,$ see e.g.\ \cite{tang}). Note that whenever the maximum likelihood estimator $\widetilde{\theta}_n$ performs well, so does the one-step estimator $\overline{\theta}_n,$ which in general seems to yield virtually indistinguishable results. Another general remark is that for $n$ fixed all the estimators tend to perform better for larger values of $\Delta.$ An intuitive explanation of this fact is that increasing $\Delta$ decreases the degree of dependence between different observations, which coupled with the fact that in the case of the Ornstein-Uhlenbeck process the marginal distributions of the process $X$ contain enough information on the parameter $\theta_0,$ improves the estimation quality.

In conclusion, keeping in mind that in our simulation study we used a very simple bandwidth selector and a weight function $w(\cdot),$ the choice of which was primarily motivated by simplicity considerations, the performance of our estimator $\hat{\theta}_n$ can be deemed as satisfactory.

\section{Proofs}
\label{proofs}

\begin{proof}[Proof of Lemma \ref{mise}]
We will only prove \eqref{mise1}, as the proof of \eqref{mise3} uses similar arguments. By a standard decomposition of the weighted mean integrated square error into the sum of the weighted integrated square bias and weighted integrated variance we have
\begin{equation}
\label{mise_decomp}
\begin{aligned}
\ex\left[ \int_{\mathbb{R}} (\hat{\pi}(x)-\pi(x;\theta_0))^2 w(x) dx \right] & =  \int_{\mathbb{R}} (\ex[\hat{\pi}(x)]-\pi(x;\theta_0))^2 w(x) dx\\
&+\ex\left[\int_{\mathbb{R}} (\hat{\pi}(x)-\ex[\hat{\pi}(x)])^2 w(x) dx\right]\\
&=T_1+T_2.
\end{aligned}
\end{equation}
By assumptions of the lemma combined with Proposition 1.5 in \cite{tsybakov} it holds that
\begin{equation}
\label{t1}
T_1 \leq \| w \|_{\infty} \left( \frac{L_{\theta_0}}{\ell !} \int_{\mathbb{R}} |u|^{\alpha} |K(u)| du \right)^2 h^{2\alpha}.
\end{equation}
Next, denote
\begin{equation}
\label{yz}
Y(Z_j,x)=\frac{1}{h}K\left( \frac{x-Z_j}{h} \right)-\ex\left[ \frac{1}{h}K\left( \frac{x-Z_j}{h}\right) \right].
\end{equation}
Then
\begin{align*}
T_2 & = \frac{1}{(n+1)^2} \ex\left[ \int_{\mathbb{R}} \left( \sum_{j=0}^n Y(Z_j,x) \right)^2 w(x)dx \right]\\
& = \frac{1}{(n+1)^2} \sum_{j=0}^n \ex\left[ \int_{\mathbb{R}}    Y^2(Z_j,x)  w(x)dx \right]\\
& + \frac{1}{(n+1)^2} \sum_{i\neq j} \ex\left[ \int_{\mathbb{R}}  Y(Z_i,x)Y(Z_j,x) w(x) dx \right]\\
& = \frac{1}{n+1} \ex\left[ \int_{\mathbb{R}}    Y^2(Z_1,x)  w(x)dx \right]\\
& + \frac{1}{(n+1)^2} \sum_{i\neq j} \ex\left[ \int_{\mathbb{R}}  Y(Z_i,x)Y(Z_j,x) w(x) dx \right]\\
& = T_3+T_4
\end{align*}
holds. By Proposition 1.4 in \cite{tsybakov} we have
\begin{equation}
\label{t3}
\begin{aligned}
T_3 \leq \frac{1}{(n+1)h} \|w\|_{\infty} \int_{\mathbb{R}} K^2(u) du.
\end{aligned}
\end{equation}
Now note that
\begin{equation*}
\|Y(\cdot,\cdot)\|_{\infty} \leq 2 \|K\|_{\infty} \frac{1}{h}.
\end{equation*}
Consequently, by Lemma 3 on p.\ 10 in \cite{doukhan},
\begin{equation*}
|\ex[Y(Z_i,x)Y(Z_j,x)]| \leq 16 \|K\|_{\infty}^2 \frac{1}{h^2} \alpha_{\Delta}(|i-j|).
\end{equation*}
Thus
\begin{equation}
\label{t4}
\begin{aligned}
T_4 & \leq \frac{1}{(n+1)^2 h^2} 16 \|K\|_{\infty}^2 \| w \|_1 \sum_{i \neq j } \alpha_{\Delta}(|i-j|)\\
& = \frac{1}{(n+1)^2 h^2} 32 \|K\|_{\infty}^2 \| w \|_1 \sum_{0\leq i < j \leq n } \alpha_{\Delta}(j-i).
\end{aligned}
\end{equation}
Working out the sum on the right-hand side, we get
\begin{align*}
\sum_{0\leq i < j \leq n } \alpha_{\Delta}(j-i) & = \sum_{k=1}^{n} (n+1-k)\alpha_{\Delta}(k)\\
& \leq (n+1) \sum_{k=1}^{\infty} \alpha_{\Delta}(k),
\end{align*}
which can be seen by counting the corresponding possibilities and the trivial observation that $n+1-k\leq n+1$ for $k=1,\ldots,n.$ Note that the sum on the right-hand side of the last display is finite by Assumption \ref{cnd_mixing}. The above display, the fact that $T_2=T_3+T_4$ and the bounds \eqref{t3} and \eqref{t4} imply that
\begin{equation}
\label{t2}
T_2 \lesssim \frac{1}{nh^2}.
\end{equation}
The statement \eqref{mise1} follows from decomposition \eqref{mise_decomp} combined with formulae \eqref{t1} and \eqref{t2}. In view of the remark made at the beginning of the proof, this completes the proof of the lemma.
\end{proof}

\begin{proof}[Proof of Theorem \ref{consistency}]
We first settle the issue of measurability of $\hat{\theta}_n.$ By Lemma 2 in \cite{jennrich} to that end it is enough to have that for each fixed $\theta$ the criterion function $R_n(\theta)$ is a measurable function of the sample $Z_0,\ldots,Z_n,$ and that for $(Z_0,\ldots,Z_n)\in\mathbb{R}^{n+1}$ viewed as fixed, the function $R_n(\theta)$ is continuous in $\theta.$ However, measurability follows easily from our assumptions, while continuity of $R_n(\theta)$ in $\theta$ is a consequence of the fact that under our conditions by the corollary on p.\ 74 in \cite{whittaker} and by de la Vall\'ee Poussin's test on p.\ 72 there the function $R_n(\theta)$ is in fact three times differentiable with respect to $\theta$ (this follows by a tedious but easy verification of the assumptions made in the corollary on p.\ 74 in \cite{whittaker}). Thus in the convergence considerations we do not need to appeal to outer probability.

We will prove that
\begin{equation}
\label{convergence}
\sup_{\theta\in\Theta} |R_n(\theta)-R(\theta)|\convp 0.
\end{equation}
The statement of the theorem will then follow from this fact and assumption \eqref{identifiability} by Theorem 5.7 in \cite{vdvaart} (the fact that Chapter 5 in \cite{vdvaart} largely deals with the i.i.d.\ setting is immaterial in this case).

By the Cauchy-Schwarz inequality we have
\begin{multline*}
|R_n(\theta)-R(\theta)|\\
=\Biggl|\int_{\mathbb{R}} \left( \mu(x;\theta) \hat{\pi}(x) - \frac{1}{2} \left[ \sigma^2(x) \hat{\pi}(x) \right]^{\prime} - \mu(x;\theta)\pi(x;\theta_0) + \frac{1}{2} \left[ \sigma^2(x)\pi(x;\theta_0) \right]^{\prime} \right)\\
\times
\left( \mu(x;\theta)\hat{\pi}(x) - \frac{1}{2}\left[ \sigma^2(x) \hat{\pi}(x)\right]^{\prime} + \mu(x;\theta)\pi(x;\theta_0) - \frac{1}{2}\left[ \sigma^2(x) \hat{\pi}(x) \right]^{\prime} \right)\\
\times w(x)dx\Biggr|\\
\leq \left\{ \int_{\mathbb{R}} \left( \mu(x;\theta) (\hat{\pi}(x)-\pi(x;\theta_0)) - \frac{1}{2} \left[ \sigma^2(x)(\hat{\pi}(x)-\pi(x;\theta_0)) \right]^{\prime} \right)^2 w(x)dx \right\}^{1/2}\\
\times
\left\{  \int_{\mathbb{R}} \left( \mu(x;\theta) (\hat{\pi}(x)+\pi(x;\theta_0)) - \frac{1}{2} \left[ \sigma^2(x)(\hat{\pi}(x)+\pi(x;\theta_0)) \right]^{\prime} \right)^2 w(x)dx \right\}^{1/2}\\
=\sqrt{T_1(\theta)}\sqrt{T_2(\theta)}
\end{multline*}
with obvious definitions of $T_1(\theta)$ and $T_2(\theta).$ This inequality and Lemma \ref{lemma_t1_t2} from Appendix \ref{appendix} then yield \eqref{convergence}, which in view of the remarks we made at the beginning of this proof completes the proof of the theorem.
\end{proof}

\begin{proof}[Proof of Theorem \ref{rootn}]
Introduce the set
\begin{equation}
\label{gnepsilon}
G_{n,\varepsilon}=\{ |\hat{\theta}_n-\theta_0| \leq \varepsilon \},
\end{equation}
where $\varepsilon>0$ is some fixed number. Since $\theta_0$ is an interior point of $\Theta,$ by choosing $\varepsilon$ small enough one can achieve that on the set $G_n$ the estimator $\hat{\theta}_n$ belongs to the interior of $\Theta$ too. By the fact that $\hat{\theta}_n$ is a point of minimum of $R_n(\theta)$ it then follows that $1_{G_{n,\varepsilon}} \dot{R}_n(\hat{\theta}_n)=0.$ From this and from the mean-value theorem we have
\begin{align*}
1_{G_{n,\varepsilon}}\dot{R}_n(\theta_0) & = 1_{G_{n,\varepsilon}} \left(\dot{R}_n(\theta_0) - \dot{R}_n(\hat{\theta}_n) \right)\\
&=1_{G_{n,\varepsilon}}\int_0^1 \ddot{R}_n (\hat{\theta}_n+\lambda(\theta_0-\hat{\theta}_n))d\lambda(\theta_0-\hat{\theta}_n).
\end{align*}
The statement of the theorem follows by multiplication of the leftmost and rightmost terms of the above equality with $\sqrt{n}$ and application of Lemmas \ref{lemma_rootn1} and \ref{lemma_rootn2} from Appendix \ref{appendix}.
\end{proof}

\begin{proof}[Proof of Theorem \ref{thm_an}]
From the proofs of Theorem \ref{rootn} and Lemmas \ref{lemma_rootn1}--\ref{lemma_rootn2} from Appendix \ref{appendix} (note that our assumptions on $h$ and $\gamma$ are also used here), as well as Slutsky's lemma (Lemma 2.8 in \cite{vdvaart}) it follows that in order to establish the theorem, it is sufficient to establish asymptotic normality of
\begin{align*}
\sqrt{n+1}\int_{\mathbb{R}}v(x)(\hat{\pi}_n(x)-\pi(x;\theta_0))dx&=
\sqrt{n+1}\int_{\mathbb{R}}v(x)(\ex[\hat{\pi}_n(x)]-\pi(x;\theta_0))dx\\
&+\sqrt{n+1}\int_{\mathbb{R}}v(x)(\hat{\pi}_n(x)-\ex[\hat{\pi}_n(x)])dx.
\end{align*}
By a standard argument, cf.\ the proof of Proposition 1.2 in \cite{tsybakov}, and by our assumption on $h,$ the first term on the right-hand side of the above display converges to zero. As far as the second term is concerned, by a change of the integration variable to $u=(x-Z_j)/h$ and a simple rearrangement of the terms it can be rewritten as
\begin{multline*}
\frac{1}{\sqrt{n+1}}\sum_{j=0}^n \left\{ v(Z_j) - \ex[v(Z_j)] \right\}\\
+\frac{1}{\sqrt{n+1}}\sum_{j=0}^n \int_{-1}^{1} \{ v(Z_j+hu) - v(Z_j) \}K(u)du\\
-\sqrt{n+1} \ex\left[ \int_{-1}^{1} \{ v(Z_j+hu) - v(Z_j) \}K(u)du\right].
\end{multline*}
We want to show that the last two terms on the right-hand side of the above display vanish in probability as $n\rightarrow\infty.$ By Chebyshev's inequality it is sufficient to prove that
\begin{equation*}
\sqrt{n+1} \ex\left[ \int_{-1}^{1} \{ v(Z_j+hu) - v(Z_j) \}K(u)du\right]=o(1).
\end{equation*}
This, however, can be done through a standard argument (cf.\ the proof of Proposition 1.2 in \cite{tsybakov}) by expanding $v(Z_j+hu)$ into the Taylor polynomial of order $\alpha$ and next using the fact that $K$ is a kernel of order $\alpha,$ which yields that the left-hand side of the above display is of order $n^{1/2}h^{\alpha}=o(1).$ On the other hand, by Theorem 18.5.3 in \cite{ibragimov},
\begin{multline*}
\left\{(n+1)\left[\var[v(Z_0)]+2\sum_{j=1}^{\infty}\cov(v(Z_0),v(Z_j))\right]\right\}^{-1/2}\\
\times\sum_{j=0}^n \left\{ v(Z_j) - \ex[v(Z_j)] \right\}\convd \mathcal{N}\left(0,1\right).
\end{multline*}
Combination of the above results and Slutsky's lemma yield the statement of the theorem.
\end{proof}

\appendix
\section{  }
\label{appendix}

The present appendix contains a number of technical results used in the proofs of the main results of the paper in Section \ref{results}.

\begin{lemma}
\label{lemma_t1_t2}
Under the conditions of Theorem \ref{consistency} we have
\begin{equation}
\label{t1_bound}
\sup_{\theta\in\Theta}T_1(\theta)=o_{\mathbb{P}}(1)
\end{equation}
and
\begin{equation}
\label{t2_bound}
\sup_{\theta\in\Theta}T_2(\theta)=O_{\mathbb{P}}(1),
\end{equation}
where $T_1(\theta)$ and $T_2(\theta)$ are the same as in the proof of Theorem \ref{consistency}.
\end{lemma}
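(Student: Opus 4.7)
The plan is to expand the square, apply the product rule to $[\sigma^2(x)(\hat\pi(x)\pm\pi(x;\theta_0))]'$, and then use the elementary inequality $(a+b+c)^2\leq 3(a^2+b^2+c^2)$ to decouple the three resulting pieces. With the pieces decoupled, Assumption \ref{cnd_drift_volatility} lets the $\sup_{\theta\in\Theta}$ be pushed inside the integrals by replacing $\mu^2(x;\theta)$ with the dominating function $\widetilde\mu_1^2(x)$, after which the two claims reduce to controlling weighted $L_2$-errors of $\hat\pi$ and $\hat\pi'$, which is exactly what Lemma \ref{mise} provides.

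For $T_1(\theta)$ the above steps give the $\theta$-free bound
$$\sup_{\theta\in\Theta} T_1(\theta)\;\leq\; 3\int\widetilde\mu_1^2(x)w(x)(\hat\pi(x)-\pi(x;\theta_0))^2dx+3\int\sigma^2(x)(\sigma'(x))^2w(x)(\hat\pi(x)-\pi(x;\theta_0))^2dx+\tfrac34\int\sigma^4(x)w(x)(\hat\pi'(x)-\pi'(x;\theta_0))^2dx.$$
Each of the three product weights $\widetilde\mu_1^2 w$, $\sigma^2(\sigma')^2 w$, and $\sigma^4 w$ is bounded and integrable by Assumption \ref{cnd_drift_volatility}. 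Inspection of the proof of Lemma \ref{mise} shows that the only properties of the weight it uses are precisely its $\|\cdot\|_\infty$- and $\|\cdot\|_1$-norms being finite, so the lemma applies with these modified weights. Taking expectations then yields that $\ex[\sup_{\theta}T_1(\theta)]=O(h^{2\alpha}+(nh^2)^{-1})+O(h^{2(\alpha-1)}+(nh^4)^{-1})$. Under Assumption \ref{cnd_h} both rates vanish, and Markov's inequality delivers \eqref{t1_bound}.

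For $T_2(\theta)$ I would run the identical expansion, and then control $(\hat\pi+\pi)^2\leq 2(\hat\pi-\pi)^2+8\pi^2$ together with the analogous inequality for the derivative. The terms carrying $(\hat\pi-\pi)^2$ and $(\hat\pi'-\pi')^2$ are $o_{\mathbb{P}}(1)$ by the $T_1$ analysis, while the remaining deterministic pieces are of the form $\|\pi\|_\infty^2\int\widetilde\mu_1^2 w\,dx$, $\|\pi\|_\infty^2\int\sigma^2(\sigma')^2 w\,dx$ and $\|\pi'\|_\infty^2\int\sigma^4 w\,dx$, each finite by the boundedness of $\pi$ and $\pi'$ in Assumption \ref{assump_sol} together with the integrability of the relevant products supplied by Assumption \ref{cnd_drift_volatility}. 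Summing the contributions gives \eqref{t2_bound}.

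The hard part is not conceptual but bookkeeping: after expanding, one must verify that every combination of $\mu$, $\sigma$, $\sigma'$, and $w$ that appears is dominated by one of the bounded-and-integrable products itemised in Assumption \ref{cnd_drift_volatility}. That seemingly baroque list was designed precisely so that this estimate, and the closely related Lemma \ref{t1t2t3t4} from the appendix, go through; once matched up term by term, the rest of the proof is a routine assembly of Lemma \ref{mise} with Markov's inequality.
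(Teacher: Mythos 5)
Your proof is correct and follows essentially the same route as the paper's: a $c_2$-type inequality to decouple the terms, domination of $\sup_\theta\mu^2(\cdot;\theta)$ by $\widetilde\mu_1^2(\cdot)$, and an application of Lemma \ref{mise} with the modified weights $\widetilde\mu_1^2 w$, $\sigma^2(\sigma')^2 w$ and $\sigma^4 w$, which Assumption \ref{cnd_drift_volatility} guarantees are bounded and integrable. The only difference is that you carry out explicitly (product rule, the constant $3/4$, the bound $(\hat\pi+\pi)^2\leq 2(\hat\pi-\pi)^2+8\pi^2$ for $T_2$) what the paper compresses into ``a slight variation of Lemma \ref{mise}'' and ``similar arguments.''
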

\begin{proof}
We will only prove \eqref{t1_bound}, because \eqref{t2_bound} can be proved by similar arguments. By the $c_2$-inequality and Assumption \ref{cnd_drift_volatility} we have
\begin{multline*}
\sup_{\theta\in\Theta} T_1(\theta)\lesssim \int_{\mathbb{R}} ( \hat{\pi}(x) - \pi(x;\theta_0) )^2 \widetilde{\mu}_1^2(x) w(x) dx \\
+ \int_{\mathbb{R}} \left(  \left[ \sigma^2(x) (\hat{\pi}(x)-\pi(x;\theta_0)) \right]^{\prime} \right)^2w(x)dx.
\end{multline*}
A slight variation of Lemma \ref{mise} (with a suitable choice of the weight function $w(\cdot)$ there) then shows that the right-hand side converges to zero in probability. This completes the proof of the lemma.
\end{proof}

\begin{lemma}
\label{lemma_rootn1}
Under the conditions of Theorem \ref{rootn} we have
\begin{equation*}
1_{G_{n,\varepsilon}}\sqrt{n}\dot{R}_n(\theta_0)=O_{\mathbb{P}}(1),
\end{equation*}
where the set $G_{n,\varepsilon}$ is defined in \eqref{gnepsilon}.
\end{lemma}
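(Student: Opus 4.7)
The plan is to compute $\dot R_n(\theta_0)$ explicitly, linearise it in $\hat\pi-\pi(\cdot;\theta_0)$ via an integration by parts, split the result into a ``main'' linear term driven by $v$ plus a ``quadratic'' remainder in $\hat\pi-\pi(\cdot;\theta_0),$ and control the two pieces separately. The indicator $1_{G_{n,\varepsilon}}$ plays no essential role here: since $\dot R_n(\theta_0)$ does not depend on $\hat\theta_n,$ it is enough to show $\sqrt n\,\dot R_n(\theta_0)=O_{\mathbb P}(1).$

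First, I would differentiate \eqref{crf} under the integral sign (justified by the dominating functions in Assumption \ref{cnd_drift_volatility}) and use the ODE \eqref{diffeq0} to subtract zero, so that with $\Delta\pi(x)=\hat\pi(x)-\pi(x;\theta_0)$ one obtains
\begin{equation*}
\dot R_n(\theta_0)=2\int_{\mathbb R}\Bigl(\mu(x;\theta_0)\Delta\pi(x)-\tfrac12\bigl[\sigma^2(x)\Delta\pi(x)\bigr]'\Bigr)\dot\mu(x;\theta_0)\hat\pi(x)w(x)\,dx.
\end{equation*}
An integration by parts then moves the derivative off $\Delta\pi;$ the boundary terms vanish because $\hat\pi$ has (random) compact support by Assumption \ref{cnd_kernel} and $\sigma^2(x)w(x)\widetilde\mu_2(x)\to 0$ as $|x|\to\infty.$ Writing $\hat\pi=\pi(\cdot;\theta_0)+\Delta\pi$ in the factor that now multiplies $\Delta\pi$ produces the clean decomposition $\dot R_n(\theta_0)=L_n+Q_n,$ where $L_n=\int_{\mathbb R} v(x)\Delta\pi(x)\,dx$ is linear and $Q_n$ collects all remaining terms, each of which contains $\Delta\pi$ twice.

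Expanding $Q_n$ yields a collection of weighted integrals of $\Delta\pi^2$ together with one mixed term of the form $\int c(x)\Delta\pi(x)\Delta\pi'(x)\,dx.$ I would rewrite the latter via $2\Delta\pi\Delta\pi'=(\Delta\pi^2)'$ and a further integration by parts as $-\tfrac12\int c'(x)\Delta\pi(x)^2\,dx,$ so that every summand in $Q_n$ takes the form $\int\widetilde w(x)\Delta\pi(x)^2\,dx$ with $|\widetilde w|$ bounded by one of the products listed in Assumption \ref{cnd_drift_volatility} (this is precisely why that list contains, for example, $\widetilde\mu_2\sigma\sigma' w,$ $\widetilde\mu_5\sigma^2 w,$ and $\widetilde\mu_2\sigma^2 w'$). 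Lemma \ref{mise}, whose proof uses only boundedness and integrability of the weight, applied with each such $\widetilde w$ then gives $Q_n=O_{\mathbb P}(h^{2\alpha}+1/(nh^2)).$ For $h\asymp n^{-1/(2\alpha)}$ this is $O_{\mathbb P}(n^{-1}+n^{-1+1/\alpha}),$ so $\sqrt n\,Q_n=o_{\mathbb P}(1)$ since $\alpha>3>2.$

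The main term $L_n$ is then handled by a bias-variance split. The bias $\mathbb E[L_n]=\int v(x)(\mathbb E[\hat\pi(x)]-\pi(x;\theta_0))\,dx$ is of order $h^\alpha$ by a standard Taylor-expansion argument exploiting the order-$\alpha$ kernel (Assumption \ref{cnd_kernel}) and the Nikol'ski-type smoothness (Assumption \ref{cnd_smoothness}), together with the fact that $v$ is uniformly bounded and integrable -- a consequence of Assumptions \ref{assump_sol} and \ref{cnd_drift_volatility}; hence $\sqrt n\,\mathbb E[L_n]=O(1).$ For the centred part, changing variables in $\hat\pi$ rewrites $L_n-\mathbb E[L_n]$ as a sample mean of a uniformly bounded function of $Z_j,$ so the covariance inequality for $\alpha$-mixing sequences (Lemma~3 on p.~10 in \cite{doukhan}) combined with Assumption \ref{cnd_mixing} yields $\mathrm{Var}(L_n)=O(1/n),$ whence $\sqrt n(L_n-\mathbb E[L_n])=O_{\mathbb P}(1)$ by Chebyshev. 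Combining the bounds on $L_n$ and $Q_n$ delivers the claim. The main obstacle is the careful bookkeeping in the two integrations by parts and in the expansion of $[\dot\mu\Delta\pi\,w]'$ and $[\sigma^2\dot\mu\,w]',$ at each step matching the resulting coefficient to one of the bounded-and-integrable entries of Assumption \ref{cnd_drift_volatility}; this is the purpose of that otherwise formidable-looking list.
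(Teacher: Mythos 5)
Your proposal is correct and follows the paper's own strategy almost step for step: the same differentiation under the integral combined with subtraction of zero via \eqref{diffeq0}, the same split of $\dot R_n(\theta_0)$ into a linear part (which, after an integration by parts, is exactly $\int_{\mathbb R} v(x)(\hat\pi(x)-\pi(x;\theta_0))\,dx$ with the paper's $v$) plus a remainder quadratic in $\hat\pi-\pi(\cdot;\theta_0)$, the same bias--variance treatment of the linear part (order-$\alpha$ kernel for the bias, Chebyshev plus the covariance inequality of Lemma~3 on p.~10 of \cite{doukhan} and Assumption \ref{cnd_mixing} for the stochastic part), and the same appeal to weighted variants of Lemma \ref{mise} for the quadratic terms; your remark that the indicator $1_{G_{n,\varepsilon}}$ is immaterial here is also right. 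The single point where you genuinely deviate is the cross term $\int c(x)\Delta\pi(x)\Delta\pi'(x)\,dx$: the paper (term $T_4$ in Lemma \ref{t1t2t3t4}) bounds it by Cauchy--Schwarz using both rates \eqref{mise1} and \eqref{mise3}, which yields $\sqrt n\,T_4=O(n^{3/(2\alpha)})\cdot O_{\mathbb P}(1)\cdot$(the $\sqrt n$ factor), i.e.\ $n^{-1/2+3/(2\alpha)}$ after bookkeeping --- this is precisely where $\alpha>3$ is used --- whereas you integrate by parts once more via $2\Delta\pi\Delta\pi'=(\Delta\pi^2)'$ to reduce everything to $\int\widetilde w\,\Delta\pi^2$ and need only \eqref{mise1}. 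Both are valid under the stated assumptions; your variant is marginally more economical for this lemma (it does not invoke the derivative rate $1/(nh^4)$ at all), at the price of verifying that the boundary term vanishes via $\widetilde\mu_2(x)\sigma^2(x)w(x)\to0$ and that $[\dot\mu\,\sigma^2 w]'$ is dominated by the entries $\widetilde\mu_5\sigma^2w$, $\widetilde\mu_2\sigma\sigma'w$, $\widetilde\mu_2\sigma^2w'$ of Assumption \ref{cnd_drift_volatility}, which it is.
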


\begin{proof}
Differentiating under the integral sign with respect to $\theta$ the function $R_n(\theta),$ we obtain
\begin{multline*}
1_{G_{n,\varepsilon}}\sqrt{n}\dot{R}_n(\theta_0)\\
=1_{G_{n,\varepsilon}}\sqrt{n} 2 \int_{\mathbb{R}}\left( \mu(x;\theta_0)\hat{\pi}(x) - \frac{1}{2}\left[ \sigma^2(x)\hat{\pi}(x) \right]^{\prime} \right)\dot{\mu}(x;\theta_0)\hat{\pi}(x)w(x)dx.
\end{multline*}
In view of Assumption \ref{assump_characterisation} the right-hand side can be rewritten as
\begin{multline*}
1_{G_{n,\varepsilon}} 2 \sqrt{n} \int_{\mathbb{R}} \dot{\mu}(x;\theta_0)\hat{\pi}(x)w(x) \Bigl( \mu(x;\theta_0) (\hat{\pi}(x) - \pi(x;\theta_0) ) \\
-\frac{1}{2}\left[ \sigma^2(x) (\hat{\pi}(x)-\pi(x;\theta_0)) \right]^{\prime}\Bigr)dx\\
=1_{G_{n,\varepsilon}} 2 \sqrt{n} \int_{\mathbb{R}} \dot{\mu}(x;\theta_0)\pi(x;\theta_0)w(x) \mu(x;\theta_0) (\hat{\pi}(x) - \pi(x;\theta_0) ) dx \\
- 1_{G_{n,\varepsilon}} \sqrt{n} \int_{\mathbb{R}} \dot{\mu}(x;\theta_0) \pi(x;\theta_0) w(x) \left[ \sigma^2(x) (\hat{\pi}(x)-\pi(x;\theta_0)) \right]^{\prime}dx\\
+1_{G_{n,\varepsilon}} 2 \sqrt{n} \int_{\mathbb{R}} \dot{\mu}(x;\theta_0) \mu(x;\theta_0) w(x) (\hat{\pi}(x) - \pi(x;\theta_0) )^2 dx\\
-1_{G_{n,\varepsilon}} \sqrt{n} \int_{\mathbb{R}} \dot{\mu}(x;\theta_0) w(x) (\hat{\pi}(x) - \pi(x;\theta_0) ) \left[ \sigma^2(x) (\hat{\pi}(x)-\pi(x;\theta_0)) \right]^{\prime} dx\\
=T_1+T_2+T_3+T_4.
\end{multline*}
By Lemma \ref{t1t2t3t4} the terms $T_1,T_2,T_3$ and $T_4$ are $O_{\mathbb{P}}(1).$ This completes the proof.
\end{proof}

\begin{lemma}
\label{t1t2t3t4}
Let $T_1,$ $T_2,$ $T_3$ and $T_4$ be defined as in the proof of Lemma \ref{lemma_rootn1}. Then each of them is $O_{\mathbb{P}}(1).$
\end{lemma}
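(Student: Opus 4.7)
The plan is to handle $T_1$ and $T_2$ (which are linear in $D(x):=\hat\pi(x)-\pi(x;\theta_0)$) via a bias/centered decomposition, and to handle $T_3$ and $T_4$ (which are quadratic in $D$) by crude absolute bounds combined with a weighted version of Lemma \ref{mise}. Throughout, the indicator $1_{G_{n,\varepsilon}}$ is bounded by $1$ and plays no role in the estimates below.

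For $T_1$, I would introduce $v_1(x):=\dot\mu(x;\theta_0)\mu(x;\theta_0)\pi(x;\theta_0)w(x)$, which is bounded and integrable by Assumption \ref{cnd_drift_volatility} (it is dominated by $\widetilde\mu_2\widetilde\mu_1\cdot\|\pi\|_\infty\cdot w$), and split $T_1$ into the deterministic bias term $2\sqrt n\int v_1(x)(\ex[\hat\pi(x)]-\pi(x;\theta_0))dx$ and the centered term $2\sqrt n\int v_1(x)(\hat\pi(x)-\ex[\hat\pi(x)])dx$. By the order-$\alpha$ kernel property and the Nikol'ski smoothness of $\pi(\cdot;\theta_0)$ (Proposition 1.5 in \cite{tsybakov}), the bias term is of order $\sqrt n\,h^{\alpha}=O(1)$ under the choice $h\asymp n^{-1/(2\alpha)}$. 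For the centered term, the change of variable $u=(x-Z_j)/h$ converts it to $\tfrac{2\sqrt n}{n+1}\sum_{j=0}^n\{V_{1,h}(Z_j)-\ex V_{1,h}(Z_j)\}$, where $V_{1,h}(z):=\int v_1(z+hu)K(u)du$ satisfies $\|V_{1,h}\|_\infty\le\|v_1\|_\infty\int|K(u)|du$ uniformly in $h$. The covariance bound for $\alpha$-mixing sequences (Lemma~3 on p.~10 of \cite{doukhan}) combined with Assumption \ref{cnd_mixing} then gives $\var(\sum_{j=0}^n V_{1,h}(Z_j))\lesssim n+1$, so Chebyshev's inequality yields $O_{\mathbb P}(1)$. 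For $T_2$, I would first integrate by parts: the boundary term $[\dot\mu\,\pi\,w\,\sigma^2 D]_{-\infty}^\infty$ vanishes because $D$ is uniformly bounded (as a difference of two bounded functions) while $\widetilde\mu_2 w\sigma^2\to 0$ at infinity (the final condition in Assumption \ref{cnd_drift_volatility}). The product rule then expresses $T_2$ as $\sqrt n\int v_2(x)D(x)\,dx$ with $v_2$ a sum of products involving $\dot\mu'$, $\dot\mu$, $\pi$, $\pi'$, $w$, $w'$, $\sigma^2$ and $\sigma\sigma'$; each of these products is bounded and integrable by Assumption \ref{cnd_drift_volatility}, so the $T_1$ argument applies verbatim and $T_2=O_{\mathbb P}(1)$.

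For $T_3$, the bound $|\dot\mu(\cdot;\theta_0)\mu(\cdot;\theta_0)|\le\widetilde\mu_2\widetilde\mu_1$ gives
\[
|T_3|\le 2\sqrt n\int \widetilde\mu_1(x)\widetilde\mu_2(x)w(x)D^2(x)\,dx,
\]
and rerunning the proof of Lemma \ref{mise} with the bounded integrable weight $\widetilde\mu_1\widetilde\mu_2 w$ in place of $w$ yields $\ex[\int D^2\widetilde\mu_1\widetilde\mu_2 w\,dx]\lesssim h^{2\alpha}+1/(nh^2)$. With $h\asymp n^{-1/(2\alpha)}$ this is $O(n^{-1}+n^{-1+1/\alpha})$, so Markov's inequality gives $T_3=O_{\mathbb P}(n^{-1/2+1/\alpha})=o_{\mathbb P}(1)$ since $\alpha>3$. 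For $T_4$, I plan to use $[\sigma^2 D]'=2\sigma\sigma' D+\sigma^2 D'$ together with $DD'=\tfrac12(D^2)'$, which gives
\begin{align*}
T_4&=-2\sqrt n\,1_{G_{n,\varepsilon}}\int\dot\mu(x;\theta_0)w(x)\sigma(x)\sigma'(x)D^2(x)\,dx\\
&\quad-\tfrac{\sqrt n}{2}\,1_{G_{n,\varepsilon}}\int\dot\mu(x;\theta_0)w(x)\sigma^2(x)(D^2(x))'\,dx.
\end{align*}
One further integration by parts on the second integral (whose boundary term again vanishes by $\widetilde\mu_2 w\sigma^2\to 0$ and uniform boundedness of $D^2$) reduces $T_4$ to weighted integrals of $D^2$ with integrands dominated by $\widetilde\mu_2 w\sigma\sigma'$, $\widetilde\mu_5 w\sigma^2$ and $\widetilde\mu_2 w'\sigma^2$, each of which is bounded and integrable by Assumption \ref{cnd_drift_volatility}. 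The $T_3$ argument then yields $T_4=o_{\mathbb P}(1)$.

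The hard part will be the centered-part variance bound for $T_1$ (and hence $T_2$ after integration by parts): one must verify that, after kernel smoothing by a bandwidth $h\downarrow 0$, the linear functional $\sqrt n\int v_1(\hat\pi-\ex[\hat\pi])dx$ still behaves at the variance level like the mixing empirical average $(n+1)^{-1/2}\sum\{v_1(Z_j)-\ex v_1(Z_j)\}$ with constants independent of $h$. This hinges on the uniform-in-$h$ bound $\|V_{1,h}\|_\infty\le\|v_1\|_\infty\int|K(u)|du$ together with $\sum_k\alpha_\Delta(k)<\infty$.
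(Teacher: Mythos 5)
Your treatment of $T_1$, $T_2$ and $T_3$ is essentially identical to the paper's: the same bias/centered-part decomposition for $T_1$, with the bias controlled by the order-$\alpha$ kernel and $h\asymp n^{-1/(2\alpha)}$, and the centered part controlled via the change of variable, the uniform-in-$h$ sup-norm bound on the smoothed functional, the covariance inequality for $\alpha$-mixing sequences and Chebyshev; the same integration by parts reducing $T_2$ to the $T_1$ situation; and the same Markov-plus-weighted-Lemma-\ref{mise} argument for $T_3$. Where you genuinely depart from the paper is $T_4$. The paper applies the Cauchy--Schwarz inequality to bound the integral by $\bigl\{\int \dot{\mu}^2 w\, (\hat{\pi}-\pi)^2\bigr\}^{1/2}\bigl\{\int w\,([\sigma^2(\hat{\pi}-\pi)]')^2\bigr\}^{1/2}$ and then invokes \emph{both} bounds of Lemma \ref{mise}, in particular the rate \eqref{mise3} for the derivative estimator $\hat{\pi}'$; with $h\asymp n^{-1/(2\alpha)}$ this gives $n^{-1/2+3/(2\alpha)}=o(1)$ precisely because $\alpha>3$. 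You instead write $D[\sigma^2D]'=2\sigma\sigma'D^2+\tfrac12\sigma^2(D^2)'$ and integrate by parts once more, which eliminates $D'$ altogether and reduces $T_4$ to weighted integrals of $D^2$ handled exactly like $T_3$. Your route buys two things: it never uses the rate for $\hat{\pi}'$, and it yields $T_4=o_{\mathbb{P}}(n^{-1/2+1/\alpha}\cdot n^{1/2})=o_{\mathbb{P}}(1)$ already for $\alpha>2$; the price is the extra integrability of $\widetilde{\mu}_5\sigma^2 w$, $\widetilde{\mu}_2\sigma^2 w'$ and $\widetilde{\mu}_2\sigma\sigma'w$, all of which Assumption \ref{cnd_drift_volatility} supplies anyway. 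One small imprecision: $D=\hat{\pi}-\pi$ is not bounded \emph{uniformly in $n$} (only by $\|K\|_\infty/h+\|\pi\|_\infty$), but this does not matter for the vanishing of the boundary terms, since for each fixed $n$ the estimator $\hat{\pi}$ is bounded and compactly supported while $\widetilde{\mu}_2 w\sigma^2\rightarrow 0$ at infinity, which is all the integration-by-parts identity requires.
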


\begin{proof}
We start by proving the statement of the lemma for $T_1.$ We have
\begin{multline}
\label{decomp}
\int_{\mathbb{R}} \dot{\mu}(\theta_0,x)\pi(x;\theta_0)w(x) \mu(x;\theta_0) (\hat{\pi}(x) - \pi(x;\theta_0) ) dx\\
 = \int_{\mathbb{R}} \dot{\mu}(x;\theta_0)\pi(x;\theta_0)w(x) \mu(x;\theta_0) ( \ex[ \hat{\pi}(x)] - \pi(x;\theta_0) ) dx\\
 + \int_{\mathbb{R}} \dot{\mu}(x;\theta_0)\pi(x;\theta_0)w(x) \mu(x;\theta_0) (\hat{\pi}(x) - \ex[ \hat{\pi}(x)] ) dx.
\end{multline}
By Proposition 1.2 in \cite{tsybakov} it holds that
\begin{equation}
\label{bias}
\left|\int_{\mathbb{R}} \dot{\mu}(x;\theta_0)\pi(x;\theta_0)w(x) \mu(x;\theta_0) ( \ex[ \hat{\pi}(x)] - \pi(x;\theta_0) ) dx \right| \lesssim h^{\alpha} \lesssim n^{-1/2},
\end{equation}
where the last inequality follows from our assumption $h \asymp n^{-1/(2\alpha)}.$ Next we will show that the second term on the right-hand side of \eqref{decomp} is $O_{\mathbb{P}}(n^{-1/2}).$ To that end it suffices to show that
\begin{equation}
\label{variance}
\sqrt{n}\int_{\mathbb{R}}  (\hat{\pi}(x) - \ex[ \hat{\pi}(x)] ) v(x) dx = O_{\mathbb{P}}(1)
\end{equation}
for a function $v$ such that $\| v \|_{\infty}<\infty,$ because by Assumptions \ref{assump_sol} and \ref{cnd_drift_volatility}
\begin{equation*}
\| \dot{\mu}(\cdot;\theta_0)\pi(\cdot;\theta_0)w(\cdot) \mu(\cdot;\theta_0) \|_{\infty}<\infty.
\end{equation*}
By Chebyshev's inequality, the fact that $Z_j$'s are identically distributed and the fact that $\ex[Y(Z_j,x)]=0,$ where $Y(Z_j,x)$ is defined in \eqref{yz}, for an arbitrary constant $C$ we have
\begin{multline}
\label{prob_n}
P\left( \sqrt{n} \int_{\mathbb{R}}  (\hat{\pi}(x) - \ex[ \hat{\pi}(x)] ) v(x) dx   > C \right)\\
\leq \frac{n}{C^2} \var \left[ \int_{\mathbb{R}}  (\hat{\pi}(x) - \ex[ \hat{\pi}(x)] ) v(x) dx \right]\\
< \frac{1}{C^2} \frac{1}{n+1} \var\left[ \sum_{j=0}^n \int_{\mathbb{R}} Y(Z_j,x) v(x) dx \right]\\
= \frac{1}{C^2} \var\left[ \int_{\mathbb{R}} Y(Z_i,x) v(x) dx \right]\\
+ \frac{2}{C^2} \frac{1}{n+1} \sum_{0\leq i<j\leq n} \ex \left[ \int_{\mathbb{R}} Y(Z_i,x) v(x) dx \int_{\mathbb{R}} Y(Z_j,x) v(x) dx \right] .
\end{multline}
By a change of the integration variable it can be shown that
\begin{equation}
\label{y_bound}
\left| \int_{\mathbb{R}} Y(Z_i,x) v(x) dx \right| \leq 2 \| v \|_{\infty} \|K\|_1,
\end{equation}
which implies that
\begin{equation}
\label{varb}
\frac{1}{C^2} \var\left[ \int_{\mathbb{R}} Y(Z_i,x) v(x) dx \right] \leq \frac{ 4 \| v \|_{\infty}^2 \|K\|_1^2}{ C^2 }.
\end{equation}
Furthermore, using \eqref{y_bound} we get for $i<j$ from Lemma 3 on p.\ 10 in \cite{doukhan} that
\begin{equation*}
\left| \ex\left[ \int_{\mathbb{R}} Y(Z_i,x) v(x) dx \int_{\mathbb{R}} Y(Z_j,x) v(x) dx \right] \right| \leq 16 \| v \|_{\infty}^2 \|K\|_1^2 \alpha_{\Delta}(j-i).
\end{equation*}
By counting the cases when $j-i=k$ for $k=1,\ldots,n,$ it can be seen that
\begin{multline*}
\left|  \frac{2}{C^2} \frac{1}{n+1} \sum_{0\leq i<j\leq n} \ex \left[ \int_{\mathbb{R}} Y(Z_i,x) v(x) dx \int_{\mathbb{R}} Y(Z_j,x) v(x) dx \right] \right| \\
\leq \frac{32}{C^2} \frac{1}{n+1}  \| v \|_{\infty}^2 \|K\|_1^2 \sum_{k=1}^{n} (n+1-k)\alpha_{\Delta}(k)\\
\leq \frac{1}{C^2}  32 \| v \|_{\infty}^2 \| K \|_1^2 \sum_{k=1}^{\infty}\alpha_{\Delta}(k).
\end{multline*}
The finiteness of the sum in the rightmost term of the above display is guaranteed by Assumption \ref{cnd_mixing}. The above display and \eqref{varb} show that the left-hand side of \eqref{prob_n} can be made arbitrarily small by selecting $C$ large, which shows that \eqref{variance} holds. Formulae \eqref{decomp}--\eqref{variance} then imply that $T_1$ is $O_{\mathbb{P}}(1).$

Next we treat $T_2.$ By integration by parts and using Assumption \ref{cnd_drift_volatility},
\begin{equation*}
T_2 = 1_{G_{n,\varepsilon}} \sqrt{n} \int_{\mathbb{R}}  \left[ \dot{\mu}(x;\theta_0) \pi(x;\theta_0) w(x) \right]^{\prime} \sigma^2(x) (\hat{\pi}(x)-\pi(x;\theta_0)) dx.
\end{equation*}
The right-hand side can be treated by exactly the same arguments as used above for $T_1$ and one can show that $T_2=O_{\mathbb{P}}(1).$

We move to $T_3.$ By Chebyshev's inequality
\begin{multline*}
P\left( \sqrt{n} \int_{\mathbb{R}} \dot{\mu}(x;\theta_0) \mu(\theta_0,x) w(x) (\hat{\pi}(x) - \pi(x;\theta_0) )^2 dx > C \right)\\
\leq \frac{1}{C} \sqrt{n} \ex\left[ \int_{\mathbb{R}} | \mu(x;\theta_0) \dot{\mu}(x;\theta_0) | w(x)  (\hat{\pi}(x) - \pi(\theta_0,x) )^2 dx \right].
\end{multline*}
By a slight variation of the statement of Lemma \ref{mise} (replace $w(\cdot)$ in the statement with $\widetilde{\mu}_2(\cdot)\widetilde{\mu}_1(\cdot)w(\cdot)$) the right-hand side of the above display is $o(1)$ and hence $T_3$ is $o_{\mathbb{P}}(1).$

Finally, $T_4$ can be handled by the same argument as $T_3$ employing the Cauchy-Schwarz inequality to see that
\begin{multline*}
\left| \int_{\mathbb{R}} \dot{\mu}(x;\theta_0) w(x) (\hat{\pi}(x) - \pi(x;\theta_0) ) \left[ \sigma^2(x) (\hat{\pi}(x)-\pi(x;\theta_0)) \right]^{\prime} dx \right|\\
\leq
\left\{ \int_{\mathbb{R}} ( \dot{\mu}^2(x;\theta_0) )^2w(x) ( \hat{\pi}(x)-\pi(x;\theta_0) )^2 dx \right\}^{1/2} \\
\times \left\{ \int_{\mathbb{R}} w(x) \left( \left[ \sigma^2(x) (\hat{\pi}(x)-\pi(x;\theta_0)) \right]^{\prime} \right)^2 dx \right\}^{1/2}.
\end{multline*}
Next the arguments similar to those given above together with Lemma \ref{mise} allow one to conclude that the right-hand side is $O_{\mathbb{P}}(n^{-1/2})$ and hence $T_4=O_{\mathbb{P}}(1).$ This completes the proof of the lemma.
\end{proof}

\begin{lemma}
\label{lemma_rootn2}
Under the conditions of Theorem \ref{rootn} we have
\begin{equation*}
1_{G_{n,\varepsilon}}\int_0^1 \ddot{R}_n (\hat{\theta}_n+\lambda(\theta_0-\hat{\theta}_n))d\lambda\convp \ddot{R} (\theta_0),
\end{equation*}
where the set $G_{n,\varepsilon}$ is defined in \eqref{gnepsilon}.
\end{lemma}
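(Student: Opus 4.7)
The plan is to combine three ingredients: consistency of $\hat{\theta}_n$ from Theorem \ref{consistency}, an explicit computation of $\ddot{R}_n(\theta)$ and $\ddot{R}(\theta)$, and a uniform-in-$\theta$ version of the convergence arguments already developed in Lemma \ref{t1t2t3t4}. Differentiating twice under the integral sign, which is justified by Assumption \ref{cnd_drift_volatility} (in particular the three-fold differentiability in $\theta$ and the domination by $\widetilde{\mu}_j$), one obtains
\begin{equation*}
\ddot{R}_n(\theta)=2\int_{\mathbb{R}}\bigl[\dot{\mu}(x;\theta)\hat{\pi}(x)\bigr]^2 w(x)\,dx+2\int_{\mathbb{R}}\Bigl(\mu(x;\theta)\hat{\pi}(x)-\tfrac{1}{2}[\sigma^2(x)\hat{\pi}(x)]^{\prime}\Bigr)\ddot{\mu}(x;\theta)\hat{\pi}(x)w(x)\,dx,
\end{equation*}
and an analogous formula for $\ddot{R}(\theta)$ with $\hat{\pi}$ replaced by $\pi(\cdot;\theta_0)$. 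Because of \eqref{diffeq0}, the second integral vanishes at $\theta=\theta_0$, so $\ddot{R}(\theta_0)=2\int[\dot{\mu}(x;\theta_0)\pi(x;\theta_0)]^2 w(x)\,dx$, and continuity of $\ddot{R}(\cdot)$ at $\theta_0$ follows by dominated convergence, the dominating integrands being those listed in Assumption \ref{cnd_drift_volatility}.

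The heart of the proof is to establish
\begin{equation*}
\sup_{|\theta-\theta_0|\le\varepsilon}\bigl|\ddot{R}_n(\theta)-\ddot{R}(\theta)\bigr|\convp 0.
\end{equation*}
I would expand $\ddot{R}_n(\theta)-\ddot{R}(\theta)$ into a sum of integrals of the generic form $\int f(x;\theta)(\hat{\pi}(x)-\pi(x;\theta_0))\,dx$, $\int g(x;\theta)(\hat{\pi}(x)-\pi(x;\theta_0))^2\,dx$, and $\int h(x;\theta)[\sigma^2(x)(\hat{\pi}(x)-\pi(x;\theta_0))]^{\prime}\,dx$, where $f,g,h$ are controlled, uniformly in $\theta$, by the bounding functions from \eqref{mu_tilde} and the products listed in Assumption \ref{cnd_drift_volatility}. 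For the linear-in-$(\hat{\pi}-\pi)$ terms one splits into bias and variance exactly as in the proof of Lemma \ref{t1t2t3t4}: the bias is $O(h^{\alpha})=o(1)$ by Proposition 1.2 in \cite{tsybakov}, while the variance is $O(1/n)$ after applying Chebyshev together with Lemma 3 on p.\ 10 in \cite{doukhan} and the summability of $\alpha_{\Delta}(k)$ from Assumption \ref{cnd_mixing}; the derivative terms are reduced to the same form via integration by parts, using Assumption \ref{cnd_drift_volatility}. For the quadratic terms, a straightforward variant of Lemma \ref{mise} with the weight $w(\cdot)$ replaced by the appropriate dominating product gives $o_{\mathbb{P}}(1)$ directly. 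Since all bounds are uniform in $\theta$ (the $\theta$-dependence enters only through $\dot{\mu},\ddot{\mu}$ which are dominated by $\widetilde{\mu}_2,\widetilde{\mu}_3$), one obtains the claimed uniform convergence.

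To conclude, note that on $G_{n,\varepsilon}$ every point $\theta_n(\lambda):=\hat{\theta}_n+\lambda(\theta_0-\hat{\theta}_n)$ lies in $[\theta_0-\varepsilon,\theta_0+\varepsilon]$ for all $\lambda\in[0,1]$. Hence
\begin{equation*}
1_{G_{n,\varepsilon}}\sup_{\lambda\in[0,1]}\bigl|\ddot{R}_n(\theta_n(\lambda))-\ddot{R}(\theta_0)\bigr|\le\sup_{|\theta-\theta_0|\le\varepsilon}\bigl|\ddot{R}_n(\theta)-\ddot{R}(\theta)\bigr|+\sup_{|\theta-\theta_0|\le\varepsilon}\bigl|\ddot{R}(\theta)-\ddot{R}(\theta_0)\bigr|.
\end{equation*}
The first term on the right is $o_{\mathbb{P}}(1)$ by the previous paragraph, and the second can be made arbitrarily small by choosing $\varepsilon$ small, using the continuity of $\ddot{R}$. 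A standard $\varepsilon$--$\delta$ argument combining this with the consistency $\hat{\theta}_n\convp\theta_0$ (which for any prescribed $\varepsilon>0$ forces $\mathbb{P}(G_{n,\varepsilon})\to 1$) yields the claimed convergence in probability of the integral. The main technical obstacle is the uniform-in-$\theta$ control of the variance-type terms: this is handled exactly as in Lemma \ref{t1t2t3t4}, but requires noting that all the stochastic bounds there already dominate the $\theta$-dependent integrands by functions satisfying the integrability hypotheses of Assumption \ref{cnd_drift_volatility}, so that passing to suprema over $\theta$ inside the probability bounds costs nothing.
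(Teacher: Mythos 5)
Your proposal is correct, but it reaches the conclusion by a genuinely different route from the paper. The paper writes the integral as $\ddot{R}_n(\theta_0)$ plus a remainder: Lemma \ref{term1} proves $\ddot{R}_n(\theta_0)\convp\ddot{R}(\theta_0)$ at the single point $\theta_0$ (by the same Cauchy--Schwarz/Lemma \ref{mise} device you use), and Lemma \ref{term2} kills the remainder by a second application of the mean-value theorem, reducing it to showing that $\dddot{R}_n$ is $O_{\mathbb{P}}(1)$ uniformly on a neighbourhood of $\theta_0$ and then multiplying by $|\hat{\theta}_n-\theta_0|=o_{\mathbb{P}}(1)$; this is precisely where the third $\theta$-derivative of $\mu$ and the dominating function $\widetilde{\mu}_4$ in Assumption \ref{cnd_drift_volatility} are consumed. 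You instead prove uniform convergence $\sup_{|\theta-\theta_0|\le\varepsilon}|\ddot{R}_n(\theta)-\ddot{R}(\theta)|=o_{\mathbb{P}}(1)$ and pair it with continuity of the deterministic limit $\ddot{R}(\cdot)$ at $\theta_0$ (dominated convergence), which needs only two $\theta$-derivatives of $\mu$ and is therefore marginally more economical in smoothness assumptions; what it costs you is the need to be careful that the stochastic bounds are genuinely uniform in $\theta$ and to run the nested $G_{n,\delta}\subset G_{n,\varepsilon}$ argument at the end, since the $\varepsilon$ in $G_{n,\varepsilon}$ is fixed by Theorem \ref{rootn} while the modulus of continuity of $\ddot{R}$ forces you to shrink to a smaller $\delta$; your sketch handles both points correctly. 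Two small remarks: for the uniform convergence you do not need the bias/variance/mixing machinery of Lemma \ref{t1t2t3t4} at all --- plain Cauchy--Schwarz against the weight $w$ combined with Lemma \ref{mise}, exactly as in Lemma \ref{lemma_t1_t2}, already yields $o_{\mathbb{P}}(1)$ uniformly in $\theta$ because $\theta$ enters only through factors dominated by the $\widetilde{\mu}_j$ (and the signed-integral cancellation underlying the variance bound would in any case be lost once you take suprema inside the integral); and your observation that $\ddot{R}(\theta_0)=2\int\dot{\mu}^2(x;\theta_0)\pi^2(x;\theta_0)w(x)\,dx$ via \eqref{diffeq0} is a clean simplification that the paper leaves implicit in the cancellation of the limits of its terms $A_2$ and $A_3$.
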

\begin{proof}
We have
\begin{multline*}
1_{G_{n,\varepsilon}} \int_0^1 \ddot{R}_n (\hat{\theta}_n+\lambda(\theta_0-\hat{\theta}_n))d\lambda
=1_{G_{n,\varepsilon}}  \ddot{R}_n (\theta_0) \\
+1_{G_{n,\varepsilon}} \int_0^1 \left( \ddot{R}_n (\hat{\theta}_n+\lambda(\theta_0-\hat{\theta}_n)) - \ddot{R}_n (\theta_0) \right)d\lambda=T_1+T_2.
\end{multline*}
By Lemma \ref{term1} the term $T_1$ converges in probability to $\ddot{R}(\theta_0),$ while by Lemma \ref{term2} the term $T_2$ converges in probability to zero. This completes the proof.
\end{proof}

\begin{lemma}
\label{term1}
For $T_1$ defined as in the proof of Lemma \ref{lemma_rootn2} and under the same conditions as in Lemma \ref{lemma_rootn2} we have
$T_1\convp \ddot{R}(\theta_0).$
\end{lemma}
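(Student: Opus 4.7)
My plan is to compute $\ddot{R}_n(\theta_0)$ and $\ddot{R}(\theta_0)$ explicitly by differentiating twice under the integral sign (this is legitimate by Assumption \ref{cnd_drift_volatility}, which provides dominating envelopes $\widetilde{\mu}_j$ justifying the interchange), then decompose their difference into pieces each of which vanishes in probability by combining Assumption \ref{assump_characterisation}, Assumption \ref{cnd_drift_volatility}, and Lemma \ref{mise}. Since $1_{G_{n,\varepsilon}}$ converges to $1$ in probability by the consistency established in Theorem \ref{consistency}, Slutsky's lemma then yields the claim.

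Concretely, differentiating $R_n(\theta)$ twice gives
\begin{align*}
\ddot{R}_n(\theta_0) &= 2\int_{\mathbb{R}} \bigl(\dot{\mu}(x;\theta_0)\hat{\pi}(x)\bigr)^2 w(x)\, dx \\
&\quad + 2\int_{\mathbb{R}} \Bigl(\mu(x;\theta_0)\hat{\pi}(x) - \tfrac{1}{2}[\sigma^2(x)\hat{\pi}(x)]^{\prime}\Bigr)\ddot{\mu}(x;\theta_0)\hat{\pi}(x) w(x)\, dx,
\end{align*}
and the analogous expression with $\hat{\pi}$ replaced by $\pi(\cdot;\theta_0)$ yields $\ddot{R}(\theta_0)$. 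By Assumption \ref{assump_characterisation} the second integral in the expression for $\ddot{R}(\theta_0)$ is identically zero, so
\begin{equation*}
\ddot{R}(\theta_0) = 2\int_{\mathbb{R}} \dot{\mu}^2(x;\theta_0) \pi^2(x;\theta_0) w(x)\, dx.
\end{equation*}

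Next I would write $\ddot{R}_n(\theta_0) - \ddot{R}(\theta_0) = A_n + B_n$, where $A_n$ collects the difference $2\int \dot{\mu}^2(x;\theta_0)(\hat{\pi}^2(x)-\pi^2(x;\theta_0)) w(x)\, dx$ and, after invoking Assumption \ref{assump_characterisation} again to replace $\mu(x;\theta_0)\hat{\pi}(x) - \tfrac{1}{2}[\sigma^2(x)\hat{\pi}(x)]^{\prime}$ by $\mu(x;\theta_0)(\hat{\pi}(x)-\pi(x;\theta_0)) - \tfrac{1}{2}[\sigma^2(x)(\hat{\pi}(x)-\pi(x;\theta_0))]^{\prime}$, $B_n$ is the remaining cross term. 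For $A_n$, factor $\hat{\pi}^2-\pi^2 = (\hat{\pi}-\pi)(\hat{\pi}+\pi)$ and apply Cauchy-Schwarz together with Lemma \ref{mise} (with weight $\widetilde{\mu}_1^2 w$, which is bounded and integrable by Assumption \ref{cnd_drift_volatility}) and boundedness of $\pi$ to conclude $A_n = o_{\mathbb{P}}(1)$. For $B_n$, split it into a part involving $\hat{\pi}-\pi$ directly and a part involving the derivative; in the latter, use integration by parts exactly as in the treatment of $T_2$ in the proof of Lemma \ref{t1t2t3t4} (the required boundary terms vanish by the condition $\lim_{|x|\to\infty}\widetilde{\mu}_2(x)w(x)\sigma^2(x) = 0$), and then bound the resulting $L^2(w\,dx)$-type integrals via Cauchy-Schwarz, Lemma \ref{mise}, and the relevant integrability conditions on products like $\widetilde{\mu}_3 \widetilde{\mu}_1 w$ and $\widetilde{\mu}_3 \sigma^2 w$ listed in Assumption \ref{cnd_drift_volatility}, noting also that the factor $\hat{\pi}$ appearing in $B_n$ can be controlled through its uniform boundedness in the relevant $L^2$ sense (or simply split as $\hat{\pi} = \pi + (\hat{\pi}-\pi)$ and handled term by term).

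The main obstacle I anticipate is purely bookkeeping: keeping track of which combinations $\widetilde{\mu}_j(\cdot)\sigma^k(\cdot)w(\cdot)$ (and derivatives thereof) are actually needed when integration by parts shifts the derivative off of $\sigma^2(\hat{\pi}-\pi)$, and verifying that each resulting term is dominated by an envelope whose integrability is guaranteed by Assumption \ref{cnd_drift_volatility}. Once these dominations are in place, every piece of $A_n$ and $B_n$ is of order $o_{\mathbb{P}}(h^\alpha) + O_{\mathbb{P}}((nh^4)^{-1/2}) = o_{\mathbb{P}}(1)$ by Lemma \ref{mise} together with the bandwidth choice $h \asymp n^{-1/(2\alpha)}$ (which gives $nh^4 \to \infty$ since $\alpha>3$), so $\ddot{R}_n(\theta_0) \convp \ddot{R}(\theta_0)$, and multiplication by $1_{G_{n,\varepsilon}} \convp 1$ completes the proof.
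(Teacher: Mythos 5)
Your proposal is correct and follows essentially the same route as the paper: differentiate $R_n$ twice under the integral sign, identify the limit $2\int_{\mathbb{R}}\dot{\mu}^2(x;\theta_0)\pi^2(x;\theta_0)w(x)\,dx$, and dispose of the remainder terms via Cauchy--Schwarz combined with weighted variants of Lemma \ref{mise} and the envelopes of Assumption \ref{cnd_drift_volatility}. The only cosmetic difference is that you invoke Assumption \ref{assump_characterisation} to center the $\ddot{\mu}$ cross term in one stroke, whereas the paper's proof passes each of its three terms $A_1,A_2,A_3$ to its plug-in limit separately (omitting the details for $A_3$ as ``long and tedious''); both organisations rest on the same estimates.
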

\begin{proof}
By consistency of $\hat{\theta}_n,$ see Theorem \ref{consistency}, we have $1_{G_{n,\varepsilon}}\convp 1.$ Furthermore,
\begin{multline}
\label{threeA}
\ddot{R}_n(\theta_0)=2\int_{\mathbb{R}}\dot{\mu}^2(x;\theta_0) \hat{\pi}^2(x) w(x)dx\\
+2\int_{\mathbb{R}} \ddot{\mu}(x;\theta_0) \mu(x;\theta_0) \hat{\pi}^2(x) w(x)dx\\
-\int_{\mathbb{R}}\left[ \sigma^2(x) \hat{\pi}(x) \right]^{\prime}\ddot{\mu}(x;\theta_0)\hat{\pi}(x)w(x)dx\\
=A_1+A_2+A_3.
\end{multline}
We will treat each of the three terms on the right-hand side separately. First of all,
\begin{multline*}
A_1=2\int_{\mathbb{R}}\dot{\mu}^2(x;\theta_0) \pi^2(x;\theta_0)w(x)dx\\
+2\int_{\mathbb{R}}\dot{\mu}^2(x;\theta_0) \left\{ \hat{\pi}^2(x)  - \pi^2(x;\theta_0) \right\}w(x)dx=A_4+A_5.
\end{multline*}
We will show that $A_5$ is $o_{\mathbb{P}}(1).$ By the Cauchy-Schwarz inequality combined with the $c_2$-inequality we have
\begin{multline*}
|A_5|\leq 2 \left\{ \int_{\mathbb{R}}  \dot{\mu}^2(x;\theta_0)  ( \hat{\pi}(x) - \pi(x;\theta_0) )^2w(x)dx\right\}^{1/2}\\
\times \Biggl\{ 2 \int_{\mathbb{R}} \dot{\mu}^2(x;\theta_0)  ( \hat{\pi}(x) - \pi(x;\theta_0) )^2w(x)dx\\ + 8 \int_{\mathbb{R}} \dot{\mu}^2(x;\theta_0)  \pi^2(x;\theta_0) w(x)dx \Biggr\}^{1/2}.
\end{multline*}
The right-hand side is $o_{\mathbb{P}}(1)$ by Lemma \ref{mise}, and hence so is $A_5.$ Thus
\begin{equation}
\label{A1}
A_1=A_4+o_{\mathbb{P}}(1)=2\int_{\mathbb{R}}\dot{\mu}^2(x;\theta_0)  \pi^2(x;\theta_0) w(x)dx+o_{\mathbb{P}}(1).
\end{equation}
Now we turn to $A_2.$ By the same reasoning as used for $A_1,$ one can show that
\begin{equation}
\label{A2}
A_2=2\int_{\mathbb{R}} \ddot{\mu}(x;\theta_0) \mu(x;\theta_0) \pi^2(x;\theta_0)  w(x)dx+o_{\mathbb{P}}(1).
\end{equation}
Finally, a long and tedious computation, which is omitted to save the space and which is similar to the one used to study $A_1,$ shows that
\begin{equation}
\label{A3}
A_3=\int_{\mathbb{R}} \left[ \sigma^2(x) \pi(x;\theta_0) \right]^{\prime}\ddot{\mu}(x;\theta_0)\pi(x;\theta_0)w(x)dx+o_{\mathbb{P}}(1).
\end{equation}
The statement of the lemma follows upon collecting formulae \eqref{A1}--\eqref{A3} and using the representation \eqref{threeA}.
\end{proof}
\begin{lemma}
\label{term2}
For $T_2$ defined as in the proof of Lemma \ref{lemma_rootn2} and under the same conditions as in Lemma \ref{lemma_rootn2} we have
$T_2\convp 0.$
\end{lemma}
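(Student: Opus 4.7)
The plan is to apply the mean-value theorem a second time, thereby reducing $T_2$ to the product of $|\hat{\theta}_n-\theta_0|$ and a uniform bound on the third derivative $\dddot{R}_n(\theta)$. By Assumption \ref{cnd_drift_volatility} the drift coefficient is three times differentiable in $\theta$ and the relevant derivatives admit the $\theta$-uniform dominating functions $\widetilde{\mu}_1,\ldots,\widetilde{\mu}_4$, which produce integrable majorants, so differentiating the integrand defining $R_n$ three times under the integral sign is legitimate and $R_n\in C^3(\Theta)$. Writing $\theta_\lambda:=\hat{\theta}_n+\lambda(\theta_0-\hat{\theta}_n)$, the mean-value theorem gives
\[
|\ddot{R}_n(\theta_\lambda)-\ddot{R}_n(\theta_0)|\le|\theta_\lambda-\theta_0|\sup_{\theta\in\Theta}|\dddot{R}_n(\theta)|\le|\hat{\theta}_n-\theta_0|\sup_{\theta\in\Theta}|\dddot{R}_n(\theta)|,
\]
hence $|T_2|\le 1_{G_{n,\varepsilon}}|\hat{\theta}_n-\theta_0|\sup_{\theta\in\Theta}|\dddot{R}_n(\theta)|$. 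Since $\hat{\theta}_n\convp\theta_0$ by Theorem \ref{consistency}, the proof reduces to establishing that $\sup_{\theta\in\Theta}|\dddot{R}_n(\theta)|=O_{\mathbb{P}}(1)$.

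With $f_\theta(x):=\mu(x;\theta)\hat{\pi}(x)-\tfrac{1}{2}[\sigma^2(x)\hat{\pi}(x)]'$, three-fold differentiation in $\theta$ gives
\[
\dddot{R}_n(\theta)=2\int_{\mathbb{R}}\bigl(3\dot{\mu}(x;\theta)\ddot{\mu}(x;\theta)\hat{\pi}^2(x)+f_\theta(x)\dddot{\mu}(x;\theta)\hat{\pi}(x)\bigr)w(x)\,dx.
\]
Expanding $[\sigma^2\hat{\pi}]'=2\sigma\sigma'\hat{\pi}+\sigma^2\hat{\pi}'$ inside $f_\theta$ yields a finite list of summands, each of which is majorised uniformly in $\theta$ by an integrand of the form $q(x)\hat{\pi}^2(x)$, $q(x)\hat{\pi}(x)\hat{\pi}'(x)$ or $q(x)(\hat{\pi}'(x))^2$, where $q$ is one of the bounded and integrable functions $\widetilde{\mu}_2\widetilde{\mu}_3 w$, $\widetilde{\mu}_1\widetilde{\mu}_4 w$, $\widetilde{\mu}_4\sigma\sigma' w$, $\widetilde{\mu}_4\sigma^2 w$ that are explicitly assumed bounded and integrable in Assumption \ref{cnd_drift_volatility}. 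An AM--GM step $|\hat{\pi}\hat{\pi}'|\le\tfrac{1}{2}(\hat{\pi}^2+(\hat{\pi}')^2)$ then reduces matters to bounding $\int q\hat{\pi}^2\,dx$ and $\int q(\hat{\pi}')^2\,dx$ in probability. By the $c_2$-inequality each such integral is dominated by $2\int q\pi^2 dx+2\int q(\hat{\pi}-\pi)^2 dx$ (respectively its counterpart with $\pi'$); the deterministic pieces are finite because $\pi$ and $\pi'$ are bounded (Assumption \ref{assump_sol}) and $q$ is integrable, while the random remainders are $o_{\mathbb{P}}(1)$ by a direct application of Lemma \ref{mise} with the weight $w$ replaced by $q$ (the proof of Lemma \ref{mise} only uses boundedness and integrability of the weight).

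The main obstacle is bookkeeping: every one of the many terms produced by differentiating $R_n$ three times must be matched with a product combination from the long list in Assumption \ref{cnd_drift_volatility}, which is exactly why that assumption carries such a lengthy list. Once the domination is arranged, the combination of the mean-value theorem with the consistency of $\hat{\theta}_n$ yields $T_2\convp 0$ routinely.
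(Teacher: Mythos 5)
Your proof is correct and follows essentially the same route as the paper's: a mean-value argument reducing $T_2$ to $|\hat{\theta}_n-\theta_0|$ times a bound on the third derivative $\dddot{R}_n$ (whose explicit form you compute correctly), followed by uniform domination of the resulting integrands via the functions $\widetilde{\mu}_j$ of Assumption \ref{cnd_drift_volatility}, the $c_2$-inequality, and Lemma \ref{mise} with modified weights. The only cosmetic difference is that you bound $\sup_{\theta\in\Theta}|\dddot{R}_n(\theta)|$, whereas the paper keeps the integral form of the remainder and evaluates $\dddot{R}_n$ only at the intermediate points $\hat{\theta}_{n,\psi,\lambda}$ lying in $\Theta$ on the event $G_{n,\varepsilon}$; both work because the domination is uniform over $\Theta$.
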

\begin{proof}
Denote $\Phi_n(\theta)=\ddot{R}_n(\theta).$ Using the mean-value theorem, we have the following chain of inequalities,
\begin{multline*}
\left| 1_{G_{n,\varepsilon}} \int_0^1 ( \Phi_n(\hat{\theta}_n+\lambda(\theta_0-\hat{\theta}_n))-\Phi_n(\theta_0) )d\lambda \right|\\
=  1_{G_{n,\varepsilon}}\left|  \int_0^1 (1-\lambda) d\lambda \int_0^1  \dot{\Phi}_n ( \theta_0 + \psi(1-\lambda)(\hat{\theta}_n-\theta_0) )d\psi(\hat{\theta}_n-\theta_0)  \right| \\
\leq 1_{G_{n,\varepsilon}} \int_0^1 d\lambda \int_0^1  \left| \dot{\Phi}_n ( \theta_0 + \psi(1-\lambda)(\hat{\theta}_n-\theta_0) )\right|d\psi|\hat{\theta}_n-\theta_0|.
\end{multline*}
Since $|\hat{\theta}_n-\theta_0|=o_{\mathbb{P}}(1)$ by Theorem \ref{consistency}, in order to prove the lemma it suffices to show that
\begin{equation}
\label{multf}
1_{G_{n,\varepsilon}} \int_0^1 d\lambda \int_0^1 \left| \dot{\Phi}_n ( \theta_0 + \psi(1-\lambda)(\hat{\theta}_n-\theta_0) )\right|d\psi=O_{\mathbb{P}}(1).
\end{equation}
Observe that
\begin{align*}
\dot{\Phi}_n(\theta)&=\dddot{R}_n(\theta)\\
&=4\int_{\mathbb{R}} \dot{\mu}(x;\theta)\ddot{\mu}(x;\theta) \hat{\pi}^2(x)w(x)dx\\
&+2\int_{\mathbb{R}} \dddot{\mu}(x;\theta) \mu(x;\theta)\hat{\pi}^2(x)w(x)dx\\
&+2\int_{\mathbb{R}} \ddot{\mu}(x;\theta) \dot{\mu}(x;\theta)\hat{\pi}^2(x)w(x)dx\\
&-\int_{\mathbb{R}} \left[ \sigma^2(x) \hat{\pi}(x) \right]^{\prime} \dddot{\mu}(x;\theta) \hat{\pi}(x)w(x)dx\\
&=A_1(\theta)+A_2(\theta)+A_3(\theta)+A_4(\theta),
\end{align*}
where differentiation under the integral sign is justified by the corollary on p.\ 72 in \cite{whittaker}, by de la Vall\'ee Poussin's test on p.\ 72 there and by our assumptions. Next insert the expression above into the left-hand side of formula \eqref{multf}. Denoting
\begin{equation*}
\hat{\theta}_{n,\psi,\lambda}=\theta_0+\psi(1-\lambda)(\hat{\theta}_n-\theta_0),
\end{equation*}
we see that we need to show that
\begin{equation*}
1_{G_{n,\varepsilon}} \int_0^1 d\lambda \int_0^1 \left| \sum_{i=1}^4 A_i (\hat{\theta}_{n,\psi,\lambda}) \right| d\psi=O_{\mathbb{P}}(1).
\end{equation*}
By appropriately selecting $\varepsilon$ in the definition of the set $G_{n,\varepsilon}$ in \eqref{gnepsilon}, one can achieve that for all $\lambda,\psi\in[0,1]$ one has that $\hat{\theta}_{n,\psi,\lambda}$ belongs to the interior of the parameter set $\Theta.$ Keeping this in mind, we need to study the term
\begin{equation}
\label{A1OP1}
1_{G_{n,\varepsilon}} \int_0^1 d\lambda \int_0^1 \left| A_i (\hat{\theta}_{n,\psi,\lambda}) \right| d\psi
\end{equation}
for $i=1.$ The arguments for other terms with $i=2,3,4$ are similar and are omitted. We have
\begin{multline*}
1_{G_{n,\varepsilon}} \int_0^1 d\lambda \int_0^1 \left| A_1 (\hat{\theta}_{n,\psi,\lambda}) \right| d\psi \\
\leq 8 \int_{\mathbb{R}} \widetilde{\mu}_2(x)\widetilde{\mu}_3(x)( \hat{\pi}(x) - \pi(x;\theta_0) )^2w(x)dx\\
+8 \int_{\mathbb{R}} \widetilde{\mu}_2(x)\widetilde{\mu}_3(x) \pi^2(x;\theta_0) w(x)dx,
\end{multline*}
from which and from Lemma \ref{mise} it is immediate that \eqref{A1OP1} is $O_{\mathbb{P}}(1)$ for $i=1.$ In the light of the remarks made above this completes the proof.
\end{proof}

\bibliographystyle{plainnat}

\begin{thebibliography}{99}
\bibitem[A\"it-Sahalia(1996b)]{ait2} Y.\ A\"it-Sahalia.\ Testing continuous-time models of the spot interest rate.\ \emph{Rev. Financ.\ Stud.}, 9:385--426, 1996b.
\bibitem[Allen(2007)]{allen} E.\ Allen.\ \emph{Modeling with It\^{o} Stochastic Differential Equations}.\ Springer, Dordrecht, 2007.
\bibitem[Bandi and Phillips(2007)]{bandi} F.M.\ Bandi and P.C.B.\ Phillips.\ A simple approach to the parametric estimation of potentially nonstationary diffusions.\ \emph{J.\ Econometrics}, 137:354--395, 2007.
\bibitem[Banon(1978)]{banon} G.\ Banon.\ Nonparametric identification for diffusion processes.\ \emph{SIAM J.\ Control and Optimization}, 16:380--395, 1978.
\bibitem[Barnett(1966)]{barnett} V.D.\ Barnett.\ Evaluation of the maximum-likelihood estimator where the likelihood equation has multiple roots.\ \emph{Biometrika}, 53:151--165, 1966.
\bibitem[Bauer and Rei\ss(2008)]{reiss2} F.\ Bauer and M.\ Rei\ss. Regularisation independent of the noise level: an analysis of quasi-optimality. \emph{Inverse Problems}, 24, doi:10.1088/0266-5611/24/5/055009, 2008.
\bibitem[Bibby et al.(2010)]{m_sorensen} B.M.\ Bibby, M. Jacobsen and M. S{\o}rensen.\ Estimating functions for discretely sampled diffusion-type models. In Y.\ A\"it-Sahalia and L.P.\ Hansen (editors), \emph{Handbook of Financial Econometrics, Volume 1 -- Tools and Techniques}, 203--268, North Holland, Amsterdam, 2010.
\bibitem[Bosq and Lecoutre(1987)]{bosq} D.\ Bosq and J.-P.\ Lecoutre.\ \emph{Th\'eorie de l'estimation fonctionelle}.\ Economica, Paris, 1987.
\bibitem[Burden and Faires(2000)]{burden} R.L.\ Burden and J.D.\ Faires. \emph{Numerical Analysis}, Seventh edition. Brooks/Cole, 2000.
\bibitem[Comte et al.(2007)]{comte} F.\ Comte, V.\ Genon-Catalot and Y.\ Rozenholc.\ Penalized nonparametric mean square estimation of the coefficients of diffusion processes.\ \emph{Bernoulli}, 13:514--543, 2007.
\bibitem[Doukhan(1994)]{doukhan} P.\ Doukhan. \emph{Mixing.\ Properties and Examples.} Springer, New York, 1994.
\bibitem[Dyrting(2004)]{dyrting} S.\ Dyrting. Evaluating the noncentral chi-square distribution for the Cox-Ingersoll-Ross process.\ \emph{Comput.\ Econ.}, 24:35--50, 2004.
\bibitem[Fan and Marron(1994)]{marron} J.\ Fan and J.S.\ Marron. Fast implementations of nonparametric curve estimators. \emph{J.\ Comput.\ Graph.\ Stat.}, 3:35--56, 1994.
\bibitem[Gasser et al.(1985)]{gasser} Th.\ Gasser, H.-G.\ M\"uller and V.\ Mammitzsch.\ Kernels for nonparametric curve estimation.\ \emph{J.\ Roy.\ Stat.\ Soc.\ Ser.\ B}, 47:238--252, 1985.
\bibitem[Gikhman and Skorokhod(1968)]{gikhman} I.I.\ Gikhman and A.V.\ Skorokhod.\ \emph{Stokhasticheskie differentsialnye uravneniya i ikh prilozheniya [Stochastic Differential Equations and Their Applications]} (Russian). Naukova Dumka, Kiev, 1968.
\bibitem[Gobet et al.(2004)]{reiss} E.\ Gobet, M.\ Hoffmann and M.\ Rei\ss. Nonparametric estimation of scalar diffusions based on low frequency data.\ \emph{Ann.\ Statist.}, 32:2223--2253, 2004.
\bibitem[Gouri\'eroux and Tenreiro(2001)]{gourieroux} C.\ Gouri\'eroux and C.\ Tenreiro.
Local power properties of kernel based goodness of fit tests. \emph{J.\ Multivariate Anal.}, 78:161--190, 2001.
\bibitem[Gugushvili and Klaassen(2012)]{gugu} S.\ Gugushvili and C.A.J. Klaassen. $\sqrt{n}$-consistent parameter estimation for systems of ordinary differential equations: bypassing numerical integration via smoothing. \emph{Bernoulli}, 18:1061--1098, 2012.
\bibitem[Gy\"{o}rfi et al.(1990)]{gyorfi} L.\ Gy\"{o}rfi, W.\ H\"{a}rdle, P.\ Sarda and P.\ Vieu.\ \emph{Nonparametric Curve Estimation from Time Series}.\ Springer, Berlin, 1990.
\bibitem[Hindriks(2011)]{hindriks} R.\ Hindriks.\ \emph{Empirical Dynamics of Neuronal Rhythms: Data-driven Modeling of Spontaneous Magnetoencephalographic and Local Field Potential Recordings}.\ PhD thesis, Vrije Universiteit, Amsterdam, 2011. Available online at http://hdl.handle.net/1871/19128.
\bibitem[Ibragimov and Linnik(1965)]{ibragimov} I.A.\ Ibragimov and Ju.V.\ Linnik.
\emph{Nezavisimye i stacionarno svyazannye velichiny [Independent and Stationarily Connected Variables]} (Russian). Nauka, Moscow, 1965.
\bibitem[Jacobsen(2001)]{jacobsen} M.\ Jacobsen.\ Discretely observed diffusions: classes of estimating functions and small $\Delta$-optimality.\ \emph{Scand.\ J.\ Stat.}, 28:123--149, 2001.
\bibitem[Jacod(2000)]{jacod} J.\ Jacod.\ Non-parametric kernel estimation of the coefficient of a diffusion.\ \emph{Scand.\ J.\ Statist.}, 27:83--96, 2000.
\bibitem[Jennrich(1969)]{jennrich} R.I.\ Jennrich.\ Asymptotic properties of non-linear least squares estimators.\ \emph{Ann.\ Math.\ Statist.}, 40:633--643, 1969.
\bibitem[Karatzas and Shreve(1998)]{karatzas} I.\ Karatzas and S.\ Shreve. \emph{Brownian Motion and Stochastic Calculus}, Second edition. Springer, New York, 1998.
\bibitem[Karlin and Taylor(1981)]{karlin} S.\ Karlin and H.M.\ Taylor.\ \emph{A Second Course in Stochastic Processes}. Academic Press, New York, 1981.
\bibitem[Kessler(2000)]{kessler} M.\ Kessler.\ Simple and explicit estimating functions for a discretely observed diffusion process.\ \emph{Scand.\ J.\ Stat.}, 27:65--82, 2000.
\bibitem[Kristensen(2010)]{kristensen} D.\ Kristensen.\ Pseudo-maximum likelihood estimation in two classes of semiparametric diffusion models.\ \emph{J.\ Econometrics}, 156:239--259, 2010.
\bibitem[Kristensen(2011)]{krist} D.\ Kristensen.\ Semi-nonparametric estimation and misspecification testing of diffusion models.\ \emph{J.\ Econometrics}, 164:382--403, 2011.
\bibitem[Kusuoka and Yoshida(2000)]{kusuoka} S.\ Kusuoka and N.\ Yoshida.\
Malliavin calculus, geometric mixing, and expansion of diffusion functionals.\
\emph{Probab.\ Theory Related Fields}, 116:457--484, 2000.
\bibitem[Kutoyants(2004)]{kutoyants} Yu.A.\ Kutoyants.\ \emph{Statistical Inference for Ergodic Diffusion Processes.} Springer-Verlag, London, 2004.
\bibitem[Musiela and Rutkowski(2005)]{musiela} M.\ Musiela and M.\ Rutkowski.\ \emph{Martingale Methods in Financial Modelling}, second edition. Springer, Berlin, 2005.
\bibitem[Rosenblatt(1956)]{rosenblatt} M.A.\ Rosenblatt.\ A central limit theorem and a strong mixing condition.\ \emph{Proc.\ Nat.\ Acad.\ Sci.\ U.S.A.}, 42:43--47, 1956.
\bibitem[Rozanov and Volkonski{\u\i}(1959)]{rozanov} V.A.\ Volkonski{\u\i} and Yu.A.\ Rozanov. Some limit theorems for random functions. {I}. \emph{Theor.\ Probability Appl.}, 4:178--197, 1959.
\bibitem[Small et al.(2000)]{small} C.G.\ Small, J.\ Wang and Z.\ Yang.\ Eliminating multiple root problems in estimation.\ \emph{Statist.\ Sci.}, 15:313--341, 2000.
\bibitem[S{\o}rensen(2002)]{h_soren} H.\ S{\o}rensen.\ Estimation of diffusion parameters for discretely observed diffusion processes.\ \emph{Bernoulli}, 8:491--508, 2002.
\bibitem[S{\o}rensen(2004)]{h_sorensen} H.\ S{\o}rensen.\ Parametric inference for diffusion processes observed at discrete points in time: a survey.\ \emph{Int.\ Stat.\ Rev.}, 72:337--354, 2004.
\bibitem[Tang and Chen(2009)]{tang} C.Y.\ Tang and S.X.\ Chen. Parameter estimation and bias correction for diffusion processes. \emph{J.\ Econometrics}, 149:65--81, 2009.
\bibitem[Tsybakov(2009)]{tsybakov} A.B.\ Tsybakov. \emph{Introduction to Nonparametric Estimation}. Springer, New York, 2009.
\bibitem[van der Vaart(2000)]{vdvaart} A.W.\ van der Vaart. \emph{Asymptotic Statistics}, first paperback edition. Cambridge University Press, Cambridge, 2000.
\bibitem[Veretennikov(1997)]{veretennikov} A.Yu.\ Veretennikov.\
On polynomial mixing bounds for stochastic differential equations.\ \emph{Stochastic Process.\ Appl.}, 70:115--127, 1997.
\bibitem[Viennet(1997)]{viennet} G.\ Viennet.\ Inequalities for absolutely regular sequences: application to density estimation.\ \emph{Probab.\ Theory Relat.\ Fields}, 107:467--492, 1997.
\bibitem[Vieu(1991)]{vieu} P.\ Vieu.\ Quadratic errors for nonparametric estimates under dependence.\ \emph{J.\ Multivariate Anal.}, 39:324--347, 1991.
\bibitem[Whittaker and Watson(2006)]{whittaker} E.T.\ Whittaker and G.N.\ Watson.\ \emph{A Course of Modern Analysis}, reprint of the fourth (1927) edition, sixth printing.\ Cambridge University Press, Cambridge, 2006.
\bibitem[Wolfram Research, Inc.(2010)]{mathematica} Wolfram Research, Inc. Mathematica, Version 8.0. Champaign, IL, 2010.
\bibitem[Wong and Hajek(1985)]{wong} E.\ Wong and B.\ Hajek.\ \emph{Stochastic Processes in Engineering Systems.} Springer, New York, 1985.
\end{thebibliography}

\end{document}